\documentclass[a4paper, 10pt, twoside, notitlepage]{amsart}

\usepackage[utf8]{inputenc}
\usepackage{color}
\usepackage{amsmath} 
\usepackage{amssymb} 
\usepackage{amsthm}
\usepackage{geometry}
\usepackage{graphicx}
\usepackage{mathtools}
\usepackage{esint}
\usepackage[colorlinks=true,linkcolor=blue]{hyperref}

\theoremstyle{plain}
\newtheorem{thm}{Theorem}
\newtheorem{prop}{Proposition}[section]
\newtheorem{lem}[prop]{Lemma}
\newtheorem{cor}[prop]{Corollary}

\newtheorem{rmk}[prop]{Remark}

\newtheorem{example}[prop]{Example}

\newcommand {\R} {\mathbb{R}} 
 \newcommand {\N} {\mathbb{N}}

\newcommand {\D} {\Delta}

\newcommand {\supp} {\text{supp}}

\DeclareMathOperator{\spa} {span}

\pagestyle{headings}
\begin{document}

\title[The Finite Dimensional Fractional Calder\'on Problem]{Lipschitz stability for the Finite Dimensional Fractional Calder\'on Problem with Finite Cauchy Data}

\author{Angkana Rüland}
\address{Max-Planck Institute for Mathematics in the Sciences, Inselstraße 22, 04103 Leipzig, Germany}
\email{rueland@mis.mpg.de}

\author{Eva Sincich}
\address{Dipartimento di Matematica e Geoscienze
Università degli Studi di Trieste,
via Valerio 12/1, 34127 Trieste
Italy}
\email{esincich@units.it}

\begin{abstract}
In this note we discuss the conditional stability issue  for the finite dimensional Calder\'on problem for the fractional Schr\"{o}dinger equation  with a finite number of measurements. More precisely, we assume that the unknown potential $q \in L^{\infty}(\Omega) $ in the equation $((-\D)^s+ q)u = 0 \mbox{ in } \Omega\subset \mathbb{R}^n$  satisfies the a priori assumption that it is contained in a finite dimensional subspace of $L^{\infty}(\Omega)$. Under this condition we prove Lipschitz stability estimates for the fractional Calder\'on problem by means of finitely many Cauchy data depending on $q$. We allow for the possibility of zero being a Dirichlet eigenvalue of the associated fractional Schrödinger equation. Our result relies on the strong Runge approximation property of the fractional Schrödinger equation.
\end{abstract}

\maketitle

\section{Introduction}

In this note we seek to prove Lipschitz stability estimates for the fractional Calder\'on problem with a finite number of measurements under structural a priori information on the potential. More precisely, we consider the following 
direct problem 
\begin{align}
\label{eq:frac_Cal}
\begin{split}
((-\D)^s+ q)u &= 0 \mbox{ in } \Omega, \\
u& = f \mbox{ in } \Omega_e,
\end{split}
\end{align}
where $\Omega \subset \R^n$ with $n\geq 1$ is an open, bounded domain and $\Omega_e=\R^n \setminus \overline{\Omega}$. 
Assuming that zero is not a Dirichlet eigenvalue, i.e., that the following condition holds
\begin{align}
\label{eq:not_zero}
[u \in H^{s}(\R^n), \ ((-\D)^s + q) u = 0 \mbox{ in } \Omega, \ u = 0 \mbox{ in } \Omega_e] \Rightarrow u \equiv 0 \mbox{ in } \R^n,
\end{align}
it is possible to formally define the (generalized) Dirichlet-to-Neumann map (c.f. \eqref{eq:DtN} for a weak formulation of it)
\begin{align}
\label{eq:DtN}
\Lambda_q: H^{s}(\Omega_e) \rightarrow \widetilde{H}^{-s}(\Omega_e), \
f \mapsto (-\D)^s u|_{\Omega_e}.
\end{align}
In \cite{GSU16} it was shown that the associated inverse problem -- the fractional Calder\'on problem -- in which one seeks to recover the potential $q$ in a suitable function space (e.g. $q \in L^{\infty}(\Omega)$) from the knowledge of the Dirichlet-to-Neumann map $\Lambda_q$, is injective. This uniqueness result was extended to (almost) scaling critical spaces, $q\in Z^{-s}(\Omega)$, in \cite{RS17}. Motivated by the formal well-definedness in terms of degrees of freedom versus unknowns, in \cite{GRSU18} it was proved that a single measurement suffices to reconstruct the potential for the fractional Calder\'on problem.
In \cite{RS17} it was also shown that for the full infinite dimensional problem with infinite measurements conditional stability with a logarithmic modulus holds. By virtue of the constructions in \cite{RS17d}, this dependence is optimal. We refer to \cite{CL18, GLX17, HL17, LL17, RS17, S17} and the references therein for further developments on the fractional Calder\'on and related problems.

In the present note, we show that as in the case of the \emph{classical} Calder\'on problem, the a priori information of the finite dimensionality of the potential $q$ allows one to improve the modulus of continuity in the stability estimate to a \emph{Lipschitz continuous} dependence. Moreover, as in the classical setting the Lipschitz constant however diverges exponentially with the increasing ``complexity" of the setting (c.f. Section \ref{sec:opti} for a more precise explanation of this). First results of this kind were proven in \cite{AV05, Ron06} for the classical Calder\'on problem, where a Lipschitz stability estimate and the exponential behavior of the Lipschitz constant were achieved for piecewise constant conductivities. This line of research was extended to many different directions (c.f. \cite{BF11, AdHGS16, GS15, AdHGS17, BdHQ13, AS18}). In \cite{AdHGS17} a conditional Lipschitz stability estimate for a piecewise linear potential in the classical Schr\"{o}dinger equation by means of local Cauchy data was achieved. To some extent our main results represent the nonlocal analogue of the latter as we also deal with Cauchy data which allow us to remove spectral conditions. We also prove that a finite number of measurements $\tilde{f}_j^{(k)}$ (see Theorem \ref{thm:Lip_frac_Runge} and Proposition \ref{prop:finite_Cauchy} for a precise definition), suffice to recover the potential in the finite dimensional case in a Lipschitz stable way.

Originally, these improved stability properties were derived by using two main ingredients (c.f. \cite{AV05}):
\begin{itemize}
\item Propagation of smallness estimates; 
\item Asymptotic estimates at the discontinuity interfaces for singular solutions.
\end{itemize} 
The first ingredient allows one to propagate the information from the boundary into the interior of the domain. It is based on quantitative unique continuation results as for instance in \cite{ARV09}. In order to combine this with Alessandrini's identity which yields control on the potentials/conductivities, the second ingredient, namely singular solutions, is used. The singular solutions are related to localized solutions in the sense of for instance \cite{G08, HPS17} and provide very concentrated information around the domains of interest and in particular near the discontinuity interfaces. 

In the setting of the \emph{fractional} Calder\'on problem the argument simplifies considerably. Due to the strong approximation properties which are valid in this setting (and which are dual to the strong uniqueness properties for the fractional Laplacian) \cite{GSU16, DSV14}, we do not need to construct singular solutions in this set-up. It suffices to use quantitative Runge approximation properties (c.f. Theorems \ref{thm:Runge_frac} and \ref{thm:quant_Runge_frac}, which were first deduced in \cite{RS17}). This is in the same spirit as the results in \cite{GSU16} and \cite{RS17}, where it was not necessary to construct CGO solutions but where these were replaced by the very strong Runge approximation properties that hold in the fractional set-up. Exploiting the strong Runge approximation property for the fractional Laplacian, our argument is of a ``perturbative flavour", which is reminiscent 
of the very recent work \cite{AS18} on the classical Calder\'on problem.

In the sequel we discuss the Runge approximation approach in the setting in which the potential $q$ will be assumed to be contained in a finite dimensional subspace of $L^{\infty}(\Omega)$. With only small modifications and by using the results from \cite{RS17}, it is possible to extend this to critical function spaces such as $L^{\frac{n}{2s}}(\Omega)$. We address both the situation in which zero is not a Dirichlet eigenvalue of the problem \eqref{eq:frac_Cal} (c.f. Theorem \ref{thm:Lip_frac_Runge}) and the setting in which zero is allowed to be a Dirichlet eigenvalue (c.f. Theorem \ref{thm:Lip_frac_Runge_eigen}). We derive Runge approximation properties in the latter, which might be interesting in their own right.

\subsection{Organization of the article}
After recalling some auxiliary properties from \cite{GSU16} in Section \ref{sec:pre}, we first consider the case in which the potential $q\in L^{\infty}(\Omega)$ is such that zero is \emph{not} a Dirichlet eigenvalue. In this case the main result on the improved stability estimates (c.f. Theorem \ref{thm:Lip_frac_Runge} and Corollary \ref{thm:Lip_frac_Runge_fulldata}) is a quite direct consequence of the quantitative Runge approximation properties which were deduced in \cite{RS17}. It is stated and proved in Section \ref{sec:non_zero}.

In Section \ref{sec:zero} we extend Theorem \ref{thm:Lip_frac_Runge} to the situation, in which $q\in L^{\infty}(\Omega)$ is such that it \emph{may} lead to a zero Dirichlet eigenvalue for the fractional Schrödinger operator \eqref{eq:frac_Cal}. In order to prove Theorem \ref{thm:Lip_frac_Runge_eigen} and Proposition \ref{prop:finite_Cauchy}, the main results in this set-up, we first extend the Runge approximation properties for fractional Schrödinger operators without zero eigenvalues to Schrödinger operators with potentials which allow for zero Dirichlet eigenvalues. As these are still Fredholm operators of index zero, we only have to deal with a finite dimensional additional subspace, which can be treated ``by hand", c.f. Sections \ref{sec:qual_Runge_zero}, \ref{sec:quant_Runge_zero}. In particular this allows us to argue similarly as in the case without zero eigenvalue, c.f. Section \ref{sec:Proof_zero}. 

Finally, in Section \ref{sec:opti}, we show that as in the classical case, in general the constants in the Lipschitz estimates depend exponentially on the ``complexity" of the problem, in that for the piecewise constant setting the constant depends exponentially on the number of underlying domains. This is based on a combination of ideas from \cite{Ron06} and \cite{RS17d}.

\section{Preliminaries}
\label{sec:pre}

In this section, we briefly recall some auxiliary properties from \cite{GSU16}, which will be used throughout the article. We first define the relevant function spaces and then discuss the well-posedness of the forward problem and the weak definition of the fractional Dirichlet-to-Neumann map in the absence of zero eigenvalues.

\subsection{Function spaces}

In the sequel, we will use the following $L^2$ based fractional Sobolev spaces (c.f. Chapter 3 in \cite{McLean}). Let $W \subset \R^n$ be open. Then, for $s\in \R$ we set:
\begin{align*}
&H^{s}(W)= \{y : \mbox{ there exists } Y\in H^{s}(\R^n) \mbox{ such that } y= Y|_{W} \},\\ 
&\|y\|_{H^{s}(W)}:= \inf\{\|Y\|_{H^{s}(\R^n)}: Y|_{W}=y\},\\
&\widetilde{H}^{s}(W)= \mbox{ closure of } C_c^{\infty}(W) \mbox{ in } H^s(W),\\
&H^s_{\overline{W}}= \{y \in H^{s}(\R^n): \supp(y) \subset \overline{W}\}.
\end{align*}
We have 
\begin{align*}
(H^{s}(W))^{\ast} = \widetilde{H}^{-s}(W), \  (\widetilde{H}^{-s}(W))^{\ast} = (H^{s}(W)).
\end{align*}
If $W$ is Lipschitz and $s\geq 0$, then also
$\widetilde{H}^{s}(W) = H^{s}_{\overline{W}}$.

\subsection{Well-posedness and the Dirichlet-to-Neumann map}

We recall the well-posedness results and the weak definition of the Dirichlet-to-Neumann map for fractional Schrödinger operators without zero eigenvalues. This follows the lines of \cite{GSU16}.

\begin{lem}[Lemma 2.3 in \cite{GSU16}]
\label{lem:well_posed}
Let $s\in (0,1)$ and $n\in \N$. Let $\Omega \subset \R^n$ be open and let $q\in L^{\infty}(\Omega)$. Consider for $u,v \in H^s(\R^n)$ the bilinear form
\begin{align*}
B_q(u,v):=((-\D)^{\frac{s}{2}}u, (-\D)^{\frac{s}{2}}v)_{\R^n} + (q u, v)_{\Omega}.
\end{align*}
Then, there exists a countable set $\Sigma \subset \R$ such that for $\lambda \in \R \setminus \Sigma$, any $F\in (\widetilde{H}^s(\Omega))^{\ast}$ and any $f\in H^{s}(\R^n)$ there exists a unique $u\in H^{s}(\R^n)$ such that
\begin{align*}
B_{q}(u,w) - \lambda(u,w) = F(w) 
\end{align*}
for $w \in \widetilde{H}^s(\Omega)$ and $u-f \in \widetilde{H}^s(\Omega)$. Moreover,
\begin{align*}
\|u\|_{H^s(\R^n)} \leq C (\|F\|_{(\tilde{H}^s(\Omega))^{\ast}} + \|f\|_{H^s(\R^n)}).
\end{align*}
\end{lem}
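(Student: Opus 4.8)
The plan is to treat this as a standard Fredholm-type well-posedness statement for a non-local elliptic problem, following the variational/functional-analytic scheme familiar from the local case. First I would reduce to homogeneous exterior data: writing $u = f + v$ with $v \in \widetilde{H}^s(\Omega)$, the problem becomes finding $v \in \widetilde{H}^s(\Omega)$ with $B_q(v,w) - \lambda(v,w) = F(w) - B_q(f,w) + \lambda(f,w) =: \tilde{F}(w)$ for all $w \in \widetilde{H}^s(\Omega)$, where $\tilde{F} \in (\widetilde{H}^s(\Omega))^{\ast}$ with norm controlled by $\|F\|_{(\widetilde{H}^s(\Omega))^{\ast}} + \|f\|_{H^s(\R^n)}$ (using $\|q\|_{L^\infty}$ and continuity of the bilinear form). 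So it suffices to analyze the restricted bilinear form on the Hilbert space $X := \widetilde{H}^s(\Omega)$.

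Next I would verify that $B_q$ is bounded and that $B_q(v,v) + \mu \|v\|_{L^2(\Omega)}^2 \geq c \|v\|_{\widetilde{H}^s(\Omega)}^2$ for some $\mu, c > 0$: the key point is the fractional Poincaré inequality on the bounded domain $\Omega$, namely $\|(-\D)^{s/2} v\|_{L^2(\R^n)}^2 \gtrsim \|v\|_{\widetilde{H}^s(\Omega)}^2$ for $v \in \widetilde{H}^s(\Omega)$, which, together with absorbing the lower-order term $(qv,v)_\Omega$ via $|(qv,v)_\Omega| \leq \|q\|_{L^\infty}\|v\|_{L^2(\Omega)}^2$, gives the Gårding-type coercivity with $\mu$ depending on $\|q\|_{L^\infty}$. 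Then for $\lambda < -\mu$ (real) the form $B_q(\cdot,\cdot) - \lambda(\cdot,\cdot)$ is coercive on $X$, so Lax–Milgram yields a unique solution with the stated bound; this already shows the non-solvability set is not all of $\R$.

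To handle general $\lambda$, I would invoke the compactness of the embedding $\widetilde{H}^s(\Omega) \hookrightarrow L^2(\Omega)$ (since $\Omega$ is bounded), so that the solution operator $T$ associated with the coercive shifted form composed with this embedding is a compact, self-adjoint operator on $L^2(\Omega)$. The equation $B_q(v,w) - \lambda(v,w) = \tilde F(w)$ is then equivalent to $(I - (\lambda+\mu) T) v = T \tilde F$, and the Fredholm alternative together with the spectral theorem for compact self-adjoint operators produces a countable set $\Sigma \subset \R$ (the values $\lambda$ for which $(\lambda+\mu)^{-1}$ is an eigenvalue of $T$, accumulating only at $+\infty$) outside of which the equation is uniquely solvable. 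The quantitative bound $\|u\|_{H^s(\R^n)} \leq C(\|F\|_{(\widetilde{H}^s(\Omega))^\ast} + \|f\|_{H^s(\R^n)})$ then follows from the bounded invertibility of $I - (\lambda+\mu)T$ on the orthogonal complement, plus the reduction from the first paragraph.

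The main obstacle — really the only non-routine ingredient — is the fractional Poincaré inequality underpinning coercivity; everything else (Lax–Milgram, compact embedding, Fredholm alternative, spectral theorem) is standard once that is in place. Since the statement is quoted from \cite{GSU16}, in the write-up I would simply cite that reference for the Poincaré estimate and the compact embedding, and present the argument above as the outline of why the lemma holds, without grinding through the constants.
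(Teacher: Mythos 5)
Your proposal is correct and follows essentially the same route as the proof of Lemma 2.3 in \cite{GSU16}, which this paper only quotes without reproving: reduction to homogeneous exterior data, a G\aa rding inequality plus Lax--Milgram for the shifted form, and the Fredholm alternative via the compact embedding $\widetilde{H}^s(\Omega)\hookrightarrow L^2(\Omega)$. One minor remark: the fractional Poincar\'e inequality is not really the crux, since the estimate $B_q(v,v)+\mu\|v\|_{L^2(\Omega)}^2\geq c\|v\|_{H^s(\R^n)}^2$ already follows from the Fourier characterization $\|v\|_{H^s(\R^n)}^2\simeq \|v\|_{L^2(\R^n)}^2+\|(-\D)^{s/2}v\|_{L^2(\R^n)}^2$; boundedness of $\Omega$ enters only through the compactness of the embedding.
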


The solution from Lemma \ref{lem:well_posed} also agrees with the distributional solution to \eqref{eq:frac_Cal}.

With the bilinear form from Lemma \ref{lem:well_posed} at hand, it is possible to define the weak form of the generalized \emph{Dirichlet-to-Neumann map}
\begin{align*}
\Lambda_{q}: H^s(\R^n) / \widetilde{H}^s(\Omega) \rightarrow (H^s(\R^n) / \widetilde{H}^s(\Omega))^{\ast}
\end{align*}
by
\begin{align}
\label{eq:DtNa}
(\Lambda_q [f], [g]) = B_q(u_f, g).
\end{align}
Here $f,g \in H^{s}(\R^n)$ and $u_f$ is a solution to \eqref{eq:frac_Cal} such that $u-f \in \widetilde{H}^s(\Omega)$. We note that for a bounded Lipschitz domain $\Omega$, we can identify $H^s(\R^n) / \widetilde{H}^s(\Omega)$ with $H^s(\Omega_e)$. If $\Omega$ is bounded, open and Lipschitz, we will mostly drop the quotient space notation and simply write $\Lambda_q f$ instead of $\Lambda_q [f]$ (c.f. Lemma 2.4 in \cite{GSU16} for more details).

A further ingredient which will be used in the proof of Theorem \ref{thm:Lip_frac_Runge} is a generalized Alessandrini type identity for the fractional Calder\'on problem (c.f. Lemma 2.5 in \cite{GSU16}), which we briefly recall.

\begin{lem}[\cite{GSU16}, Lemma 2.5]
\label{lem:Aless}
Let $s\in (0,1)$, $n\geq 1$. Assume that $\Omega \subset \R^n$ is an open, bounded domain and that $W \subset \Omega_e$ is open. Assume that \eqref{eq:not_zero} holds. Then for two solutions $u_1, u_2$ of \eqref{eq:frac_Cal} with potentials $q_1, q_2 \in L^{\infty}(\Omega)$ and with exterior data $f_1, f_2$ we have
\begin{align*}
((q_1-q_2)u_1,u_2)_{\Omega} = ((\Lambda_{q_1}-\Lambda_{q_2})f_1, f_2)_{W}.
\end{align*}
\end{lem}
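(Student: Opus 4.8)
The plan is to reduce both Dirichlet-to-Neumann pairings to the single symmetric bilinear form $B_{q_i}(u_1,u_2)$ and then subtract, so that the leading-order (fractional Dirichlet) terms cancel and only the potential term survives. This is the nonlocal analogue of the classical polarization/Alessandrini computation, and condition \eqref{eq:not_zero} is precisely what guarantees, via Lemma \ref{lem:well_posed}, that the solutions $u_1,u_2$ exist and are unique and hence that $\Lambda_{q_1},\Lambda_{q_2}$ are well-defined.

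First I would record two elementary facts about the bilinear form $B_q$ from Lemma \ref{lem:well_posed}. It is symmetric on $H^s(\R^n)$, since $(-\D)^{\frac{s}{2}}$ is self-adjoint on $L^2(\R^n)$ and multiplication by $q\in L^{\infty}(\Omega)$ is symmetric. Moreover, if $u$ denotes the solution of \eqref{eq:frac_Cal} with potential $q$ and exterior data $f$, then $B_q(u,w)=0$ for all $w\in\widetilde{H}^s(\Omega)$ and $u-f\in\widetilde{H}^s(\Omega)$. Combining these, if $v$ is any solution of \eqref{eq:frac_Cal} with the same potential $q$ and exterior data $g$ (so that $v-g\in\widetilde{H}^s(\Omega)$), then, using also $u-f\in\widetilde{H}^s(\Omega)$,
\begin{align*}
B_q(u,g) = B_q(u,v) - B_q(u, v-g) = B_q(u,v) = B_q(v,u) = B_q(v,f),
\end{align*}
since $B_q(u,\cdot)$ and $B_q(v,\cdot)$ both vanish on $\widetilde{H}^s(\Omega)$ and $B_q$ is symmetric. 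In particular $(\Lambda_q[f],[g]) = B_q(u,g) = B_q(v,f) = (\Lambda_q[g],[f])$, i.e. $\Lambda_q$ is symmetric.

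Next I would compute the right-hand side. Let $u_1$ be the solution for $(q_1,f_1)$ and $u_2$ the solution for $(q_2,f_2)$. By \eqref{eq:DtNa} and the identity just derived,
\begin{align*}
(\Lambda_{q_1}[f_1],[f_2]) = B_{q_1}(u_1,f_2) = B_{q_1}(u_1,u_2),
\end{align*}
using $u_2-f_2\in\widetilde{H}^s(\Omega)$ together with $B_{q_1}(u_1,\cdot)=0$ on $\widetilde{H}^s(\Omega)$; and, using first the symmetry of $\Lambda_{q_2}$, then $u_1-f_1\in\widetilde{H}^s(\Omega)$, and then the symmetry of $B_{q_2}$,
\begin{align*}
(\Lambda_{q_2}[f_1],[f_2]) = (\Lambda_{q_2}[f_2],[f_1]) = B_{q_2}(u_2,f_1) = B_{q_2}(u_2,u_1) = B_{q_2}(u_1,u_2).
\end{align*}
Subtracting, the fractional Dirichlet terms $((-\D)^{\frac{s}{2}}u_1,(-\D)^{\frac{s}{2}}u_2)_{\R^n}$ cancel, leaving
\begin{align*}
((\Lambda_{q_1}-\Lambda_{q_2})[f_1],[f_2]) = (q_1 u_1,u_2)_{\Omega} - (q_2 u_1,u_2)_{\Omega} = ((q_1-q_2)u_1,u_2)_{\Omega}.
\end{align*}
Finally, since the relevant exterior data are supported in $W\subset\Omega_e$, the duality pairing on the left localizes to a pairing over $W$, and, identifying $H^s(\R^n)/\widetilde{H}^s(\Omega)$ with $H^s(\Omega_e)$ as in Section \ref{sec:pre}, this is exactly $((\Lambda_{q_1}-\Lambda_{q_2})f_1,f_2)_W$, which gives the claimed identity.

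I do not expect a genuine obstacle here. The only points requiring a little care are (i) invoking \eqref{eq:not_zero} and Lemma \ref{lem:well_posed} to make $u_1,u_2$ and hence $\Lambda_{q_1},\Lambda_{q_2}$ meaningful, and (ii) the bookkeeping with the quotient spaces $H^s(\R^n)/\widetilde{H}^s(\Omega)$ and the restriction of the Neumann data to the measurement set $W$; everything else is forced by the symmetry of the $B_{q_i}$ and the cancellation of the leading-order term.
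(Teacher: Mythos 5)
Your proof is correct and is essentially the standard argument (the one in \cite{GSU16}, which the paper cites for this lemma, and which also underlies the paper's proof of its modification in Lemma \ref{lem:Aless_1}): reduce both pairings $(\Lambda_{q_i}[f_1],[f_2])$ to $B_{q_i}(u_1,u_2)$ using the symmetry of $B_{q_i}$, the weak formulation, and the membership $u_j-f_j\in\widetilde{H}^s(\Omega)$, then subtract so that the fractional Dirichlet terms cancel. The bookkeeping with the quotient space and the localization of the pairing to $W$ is handled appropriately.
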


Finally, we recall the weak unique continuation property for the fractional Laplacian, which due to the antilocality of the fractional Laplacian is a global result:

\begin{lem}[\cite{GSU16}, Theorem 1.2]
\label{lem:WUCP}
Let $s\in(0,1)$ and $u\in H^{s}(\R^n)$ for $n\geq 1$. Assume that for $\Omega \subset \R^n$ open
\begin{align*}
u= 0 = (-\D)^s u \mbox{ in } \Omega.
\end{align*}
Then $u \equiv 0$.
\end{lem}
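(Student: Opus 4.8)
The plan is to eliminate the nonlocality by means of the Caffarelli--Silvestre extension and thereby reduce the statement to a unique continuation principle for a degenerate elliptic equation. Writing points of $\R^{n+1}_+ := \R^n \times (0,\infty)$ as $(x,x_{n+1})$, I associate to $u \in H^s(\R^n)$ its extension $\tilde u$, i.e.\ the unique solution in the natural weighted energy space of
\begin{align*}
\nabla\cdot\big(x_{n+1}^{1-2s}\nabla \tilde u\big) &= 0 \quad\text{in } \R^{n+1}_+,\\
\tilde u(\cdot,0) &= u \quad\text{on } \R^n\times\{0\},
\end{align*}
together with the identification $\Ds u = -c_{n,s}\lim_{x_{n+1}\to 0^+} x_{n+1}^{1-2s}\p_{x_{n+1}}\tilde u$ in $H^{-s}(\R^n)$ for a dimensional constant $c_{n,s}>0$. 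Since $x_{n+1}^{1-2s}$ is a Muckenhoupt $A_2$ weight, $\tilde u$ enjoys the De Giorgi--Nash--Moser regularity theory for such weights, and away from $\{x_{n+1}=0\}$ it solves a uniformly elliptic equation with real-analytic coefficients, hence is real-analytic in $\R^{n+1}_+$.

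The two hypotheses translate into vanishing Cauchy data for $\tilde u$ on $\Omega\times\{0\}$: from $u=0$ in $\Omega$ the Dirichlet trace of $\tilde u$ vanishes on $\Omega\times\{0\}$, while from $\Ds u=0$ in $\Omega$ the weighted conormal derivative $x_{n+1}^{1-2s}\p_{x_{n+1}}\tilde u$ vanishes (as a boundary limit) on $\Omega\times\{0\}$. I would then invoke a unique continuation statement for the operator $\nabla\cdot(x_{n+1}^{1-2s}\nabla\,\cdot\,)$ from the flat, characteristic boundary: if such a solution on a half-ball $B^+$ whose flat face lies in $\Omega\times\{0\}$ has both its Dirichlet trace and its weighted conormal derivative vanishing on that face, then $\tilde u\equiv 0$ on $B^+$. (One may recast this, via an odd reflection across $\Omega\times\{0\}$ — admissible precisely because \emph{both} pieces of Cauchy data vanish — as a unique continuation problem across the hyperplane $\{x_{n+1}=0\}$, noting that this hyperplane is exactly the degeneracy locus of the weight.) Picking such a half-ball centred at a point of $\Omega\times\{0\}$ gives $\tilde u\equiv 0$ on an open subset of $\R^{n+1}_+$; by the interior real-analyticity and connectedness of $\R^{n+1}_+$ this propagates to $\tilde u\equiv 0$ in all of $\R^{n+1}_+$, and passing to the trace on $\{x_{n+1}=0\}$ yields $u\equiv 0$.

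The genuine difficulty is concentrated entirely in the unique continuation step, since the ellipticity of the extension equation degenerates exactly on the set $\{x_{n+1}=0\}$ from which one wishes to continue, so that classical Cauchy uniqueness results for uniformly elliptic equations do not apply. This is overcome by Carleman estimates tailored to the weight $x_{n+1}^{1-2s}$ (equivalently, by an Almgren-type frequency-function/monotonicity argument for the extension): after conjugating with a suitable radial weight and carefully tracking the boundary contributions generated by the degeneracy, one obtains an $L^2$ Carleman inequality forcing $\tilde u$ to vanish to infinite order at an interior point of $\Omega\times\{0\}$, which the interior analyticity then upgrades to vanishing in a neighbourhood. I expect establishing — or precisely invoking — this degenerate Carleman estimate, and in particular handling the boundary terms at $\{x_{n+1}=0\}$ and the limited regularity of $\tilde u$ up to that boundary, to be the main obstacle; the remaining steps are soft and rely only on the standard mapping properties of the Caffarelli--Silvestre extension.
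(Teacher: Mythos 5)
Your proposal is correct and follows essentially the same route as the proof the paper relies on: the lemma is quoted from \cite{GSU16} (Theorem 1.2 there), whose argument proceeds exactly via the Caffarelli--Silvestre extension, reduction to vanishing Cauchy data on $\Omega\times\{0\}$, and a degenerate Carleman estimate from \cite{Rue15} (your frequency-function alternative corresponds to \cite{FF14,Yu16}), followed by interior analyticity and connectedness of $\R^{n+1}_+$. The only cosmetic imprecision is in the last step of your unique continuation sketch: since $\tilde u$ is not analytic up to the degeneracy locus $\{x_{n+1}=0\}$, one does not upgrade infinite-order vanishing at a boundary point by analyticity, but rather the Carleman (or doubling) inequality itself directly forces $\tilde u$ to vanish on a half-ball.
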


This result was proved in \cite{GSU16} based on Carleman estimates from \cite{Rue15}, c.f. also \cite{FF14} and \cite{Yu16} for unique continuation results for the fractional Laplacian.

\section{Absence of zero Dirichlet eigenvalue}\label{sec:non_zero}

In this section, we discuss our Lipschitz stability result in the setting in which zero is not a Dirichlet eigenvalue. It is obtained by sampling the Dirichlet-to-Neumann map \emph{finitely} many times. 

\begin{thm}
\label{thm:Lip_frac_Runge}
Let $s\in(0,1)$ and $n,m\in \N$. Let $\Omega \subset \R^n$ be an open, bounded and smooth domain. Suppose that $W_1, W_2 \subset \Omega_e$ are open Lipschitz domains with $\overline{W}_k \cap \overline{\Omega} = \emptyset$ for $k\in\{1,2\}$.
Assume that for some orthonormal (w.r.t. the $L^2(\Omega)$ scalar product) functions $g_1,\dots,g_m \in L^{\infty}(\Omega)$ the potentials
$q_1,q_2 \in L^{\infty}(\Omega)$ are such that
\begin{align*}
q_1,q_2 \in \spa\{g_1,\dots,g_m\} \mbox{ and } \|q_1\|_{L^{\infty}(\Omega)}, \|q_2\|_{L^{\infty}(\Omega)}\leq C_0<\infty.
\end{align*}
Furthermore, suppose that \eqref{eq:not_zero} holds for $q_1, q_2$.

Then there exist 
\begin{itemize}
\item a constant $C_1>0$ which depends only on the geometry of the domains $W_k$, $k\in\{1,2\}$, on the functions $g_1,\dots, g_m$, on the constant $C_0$ and on $m$,
\item and
data $\{\tilde{f}_{1}^{(k)},\dots \tilde{f}_{m}^{(k)}\} \subset \widetilde{H}^{s}(W_k)$ with $\| \tilde{f}_j^{(k)} \|_{\widetilde{H}^{s}(\Omega)} = 1$,
\end{itemize} 
such that
\begin{align*}
\|q_1-q_2\|_{L^{\infty}(\Omega)} \leq C_1 \inf \left\{ \|(\Lambda_{q_1} - \Lambda_{q_2})\tilde{f}^{(1)}\|_{H^{-s}(W_{2})},  \|(\Lambda_{q_1} - \Lambda_{q_2})\tilde{f}^{(2)}\|_{H^{-s}(W_{1})} \right\}.
\end{align*} 
Here $\tilde{f}^{(k)}:=(\tilde{f}^{(k)}_1, \dots, \tilde{f}^{(k)}_m)$ and $(\Lambda_{q_1}-\Lambda_{q_2}) \tilde{f}^{(k)} := ((\Lambda_{q_1}-\Lambda_{q_2})\tilde{f}^{(k)}_1, \dots, (\Lambda_{q_1}-\Lambda_{q_2}) \tilde{f}^{(k)}_m) $ for $k\in \{1,2\}$.
\end{thm}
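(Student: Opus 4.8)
The plan is to combine the generalized Alessandrini identity of Lemma \ref{lem:Aless} with the quantitative Runge approximation property for the fractional Schr\"odinger equation (Theorem \ref{thm:quant_Runge_frac}, referenced in the excerpt) in order to reduce the infinite-dimensional matching of solution products to a finite-dimensional problem on $\spa\{g_1,\dots,g_m\}$. Concretely, write $q_1-q_2 = \sum_{i=1}^m c_i g_i$ with $c=(c_1,\dots,c_m)\in\R^m$; since the $g_i$ are $L^2(\Omega)$-orthonormal we have $\|q_1-q_2\|_{L^2(\Omega)}^2 = |c|^2$, and because everything lives in the fixed finite-dimensional space all norms are equivalent, so it suffices to bound $|c|$ from above by the right-hand side. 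The pairing $((q_1-q_2)u_1,u_2)_\Omega$ from Lemma \ref{lem:Aless} is, up to the $g_i$, of the form $\sum_i c_i (g_i u_1,u_2)_\Omega$, so if one can choose exterior data so that the solutions $u_1,u_2$ (for $q_1$) approximate a prescribed family whose products $u_1 u_2$ reproduce, after testing against the $g_i$, an invertible $m\times m$ matrix, then $|c|$ is controlled.

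First I would fix a reference point $x_0\in\Omega$ and, using the strong Runge approximation property, approximate in $L^2(\Omega)$ (in fact in $H^s$) by solutions of the fractional Schr\"odinger equation with potential $q_1$ and exterior data supported in $W_1$ (respectively $W_2$) a sufficiently rich finite family of functions $\{\phi_a\}$ chosen so that the products $\{\phi_a \phi_b\}$ span a space on which the functionals $h\mapsto (g_i,h)_\Omega$, $i=1,\dots,m$, are linearly independent; this is possible because the $g_i$ are linearly independent in $L^2(\Omega)$, so there exist $L^2$-functions $h_i$ with $(g_j,h_i)_\Omega = \delta_{ij}$, and each $h_i$ can be arbitrarily well approximated by finite linear combinations of products of solutions (products of solutions are dense in $L^1(\Omega)$ by the standard density argument behind the uniqueness proof in \cite{GSU16}, hence one can match the finitely many functionals $(g_i,\cdot)_\Omega$). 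One then selects finitely many solution products realizing, up to a small error, the identity action on $\spa\{g_1,\dots,g_m\}$; quantitatively, by Theorem \ref{thm:quant_Runge_frac} the approximation can be achieved with exterior data whose $\widetilde H^s$ norm is controlled (after normalization to $\|\tilde f_j^{(k)}\|_{\widetilde H^s(\Omega)}=1$, with the lost constant absorbed into $C_1$). Plugging these into Alessandrini's identity gives, for the normalized data $\tilde f^{(1)}_j$ with solutions $u_1^{(j)}$ for $q_1$ and $u_2^{(j)}$ for $q_2$,
\begin{align*}
\big|\, (g_i, q_1-q_2)_\Omega \,\big| \leq \Big| \sum_{j} \beta_j^{(i)} ((\Lambda_{q_1}-\Lambda_{q_2})\tilde f^{(1)}_j, \tilde f^{(2)}_j)_{W_2} \Big| + \text{(approximation error)}\cdot \|q_1-q_2\|_{L^\infty(\Omega)},
\end{align*}
where $\beta^{(i)}$ are the coefficients of the chosen linear combination. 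Choosing the Runge approximation error small enough (depending only on $g_1,\dots,g_m$, $C_0$, $m$, and the geometry) that the last term can be absorbed into the left-hand side after summing $i=1,\dots,m$ and using norm equivalence on the finite-dimensional space, one obtains $\|q_1-q_2\|_{L^\infty(\Omega)} \le C_1 \|(\Lambda_{q_1}-\Lambda_{q_2})\tilde f^{(1)}\|_{H^{-s}(W_2)}$, and symmetrically with the roles of $W_1,W_2$ exchanged, which yields the stated infimum.

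The main obstacle I anticipate is the \emph{quantitative} bookkeeping in the absorption step: one must make sure that the Runge approximation error, which depends on the target functions $h_i$ and hence ultimately only on $g_1,\dots,g_m$ and $m$, can be taken uniformly small independently of $q_1,q_2$ within the prescribed class, and that the constant $C_1$ depends only on the advertised quantities (geometry of $W_k$, the $g_i$, $C_0$, $m$) and not on the unknown potentials. This requires care because the Runge approximation in Theorem \ref{thm:quant_Runge_frac} is stated for a fixed potential; one uses it with potential $q_1$ (or $q_2$), and the constants there depend on $\|q_1\|_{L^\infty(\Omega)}\le C_0$ and on the geometry only, so uniformity holds. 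A secondary technical point is the control of $\|u_2^{(j)}\|$ appearing in the error term $((q_1-q_2)(u_1^{(j)}-u_2^{(j)}),u_2^{(j)})$-type remainders, which follows from the well-posedness bound in Lemma \ref{lem:well_posed} together with $\|q_2\|_{L^\infty(\Omega)}\le C_0$; again all constants stay uniform. Everything else — norm equivalence on the finite-dimensional space, the algebra of inverting an $m\times m$ near-identity matrix — is routine.
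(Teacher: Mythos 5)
Your proposal is correct and follows essentially the same route as the paper's proof: Runge-approximate (with potential $q_k$ and exterior data supported in $W_k$) a family of functions whose products realize, up to a small error, an invertible $m\times m$ matrix against the basis $g_1,\dots,g_m$, insert into Alessandrini's identity, absorb the approximation error using the $L^\infty$ bound $C_0$ and finite-dimensional norm equivalence, and symmetrize in $W_1,W_2$. The only cosmetic differences are that the paper inverts the near-identity matrix $M$ rather than working with a prefabricated dual basis $h_i$ with $(g_j,h_i)_\Omega=\delta_{ij}$, and that the relevant quantitative Runge result here is the one for potentials without zero Dirichlet eigenvalue (Theorem \ref{thm:Runge_frac}) rather than Theorem \ref{thm:quant_Runge_frac}.
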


\begin{rmk}
\label{rmk:ortho_linind} 
With no loss of generality and by arguing with a Gram-Schmidt orthonormalization process we can relax the orthonormality requirement on the basis $g_1, \dots, g_m$ into a linearly independence one.
\end{rmk}

\begin{rmk}
\label{rmk:dependence}
We emphasize that the dependences of the constant $C_1>0$ can be made explicit in terms of the quantities $W_k$, $k\in\{1,2\}$, on $C_0>0$ and in terms of the functions $g_1,\dots, g_m$ (c.f. Section \ref{sec:non_zero}). It depends exponentially on the ``complexity" of the setting, c.f. Section \ref{sec:opti}. 
\end{rmk}

\begin{rmk}
\label{rmk:finite}
We remark that the functions $f_1^{(k)}, \dots, f_m^{(k)}$ with $k \in \{1,2\}$ are obtained as ``control functions" for the Runge approximation of the basis to $g_1, \dots, g_m$ by solutions to a fractional Schrödinger equation with potential $q_k$. Thus, in our argument, the functions $f_1^{(k)}, \dots, f_m^{(k)}$ with $k \in \{1,2\}$ \emph{depend} on the choice of the potentials $q_k$. 

In particular, we emphasize that although the stability estimate is formulated in terms of a \emph{finite} sample of the full Dirichlet-to-Neumann map, this should be mainly viewed as of theoretical interest (rather than of practical relevance), since in general the functions $\tilde{f}^{(k)}$ with $k\in \{1,2\}$ are \emph{unknown}. Hence, although one could easily turn the stability argument leading to Theorem \ref{thm:Lip_frac_Runge} into a recovery algorithm, we do not formulate it explicitly here.

The Lipschitz stability property itself however \emph{should} be regarded as of practical relevance, since it improves the very slow logarithmic stability from the general set-up to much better Lipschitz bounds in the finite dimensional setting (at the price of large constants, which depend on the complexity of the problem).
\end{rmk}

For the sake of completeness, we formulate below the Lipschitz stability estimate from full boundary data, i.e. from the knowledge of the (generalized) Dirichlet-to-Neumann map. The proof is a straightforward consequence of Theorem \ref{thm:Lip_frac_Runge}.

\begin{cor}
\label{thm:Lip_frac_Runge_fulldata}
Under the hypotheses of Theorem \ref{thm:Lip_frac_Runge} we also have that 
\begin{align*}
\|q_1-q_2\|_{L^{\infty}(\Omega)} \leq C_1 \|\Lambda_{q_1} - \Lambda_{q_2}\|_{*}.
\end{align*} 
Here $\|\Lambda_{q_1} - \Lambda_{q_2}\|_{*}:=
\sup\left\{\int\limits_{\R^n} ((\Lambda_{q_1}- \Lambda_{q_2}) f_1) f_2 dx: \ \|f_j\|_{\widetilde{H}^{s}(W_k)}=1\right\}$.
\end{cor}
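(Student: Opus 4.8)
The plan is to reduce the full-data estimate to the finite-data estimate from Theorem~\ref{thm:Lip_frac_Runge} by observing that the test functions $\tilde f_j^{(k)}$ appearing there are, by construction, normalized elements of $\widetilde H^s(W_k)$, so that each quantity $\|(\Lambda_{q_1}-\Lambda_{q_2})\tilde f^{(k)}\|_{H^{-s}(W_{k'})}$ is controlled by the operator-type norm $\|\Lambda_{q_1}-\Lambda_{q_2}\|_*$ defined in the statement. First I would recall that for a bounded Lipschitz domain $W_k$ with $\overline{W}_k\cap\overline{\Omega}=\emptyset$, the dual pairing $((\Lambda_{q_1}-\Lambda_{q_2})f_1,f_2)_{W_{k'}}$ makes sense for $f_1\in\widetilde H^s(W_k)$ and $f_2\in\widetilde H^s(W_{k'})$ via Lemma~\ref{lem:Aless}, since the right-hand side there is an interior pairing against $(q_1-q_2)$ supported in $\Omega$. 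Then, by duality between $H^{-s}(W_{k'})$ and $\widetilde H^s(W_{k'})$,
\begin{align*}
\|(\Lambda_{q_1}-\Lambda_{q_2})\tilde f_j^{(k)}\|_{H^{-s}(W_{k'})}
= \sup_{\|h\|_{\widetilde H^s(W_{k'})}=1} \big|((\Lambda_{q_1}-\Lambda_{q_2})\tilde f_j^{(k)}, h)_{W_{k'}}\big|
\leq \|\Lambda_{q_1}-\Lambda_{q_2}\|_*,
\end{align*}
using that $\|\tilde f_j^{(k)}\|_{\widetilde H^s(W_k)}=\|\tilde f_j^{(k)}\|_{\widetilde H^s(\Omega)}=1$ (here one uses that $\widetilde H^s(W_k)\hookrightarrow\widetilde H^s(\Omega)$ with equal norms, or simply renormalizes, which only affects $C_1$ by a harmless multiplicative factor).

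The second step is to pass from the componentwise bound to the bound on the vector $\tilde f^{(k)}=(\tilde f_1^{(k)},\dots,\tilde f_m^{(k)})$: since $\|(\Lambda_{q_1}-\Lambda_{q_2})\tilde f^{(k)}\|_{H^{-s}(W_{k'})}$ is (by the definition given right after Theorem~\ref{thm:Lip_frac_Runge}) an $\ell^2$- or $\ell^\infty$-type aggregate of the $m$ components, it is bounded by $\sqrt m\,\|\Lambda_{q_1}-\Lambda_{q_2}\|_*$ (or simply $\|\Lambda_{q_1}-\Lambda_{q_2}\|_*$ for the $\ell^\infty$ convention), and the factor $\sqrt m$ can be absorbed into $C_1$, which already depends on $m$. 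Combining this with the conclusion of Theorem~\ref{thm:Lip_frac_Runge} gives
\begin{align*}
\|q_1-q_2\|_{L^\infty(\Omega)}
\leq C_1 \inf\Big\{\|(\Lambda_{q_1}-\Lambda_{q_2})\tilde f^{(1)}\|_{H^{-s}(W_2)},\ \|(\Lambda_{q_1}-\Lambda_{q_2})\tilde f^{(2)}\|_{H^{-s}(W_1)}\Big\}
\leq C_1 \|\Lambda_{q_1}-\Lambda_{q_2}\|_*,
\end{align*}
after relabeling the constant. This is exactly the claimed inequality.

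Since this is a direct corollary, there is no real obstacle; the only point requiring a little care is bookkeeping of the normalizations — the $\tilde f_j^{(k)}$ are normalized in $\widetilde H^s(\Omega)$ rather than in $\widetilde H^s(W_k)$, and the definition of $\|\cdot\|_*$ uses the $\widetilde H^s(W_k)$ normalization, so one must check these norms agree (which they do, because $C_c^\infty(W_k)\subset C_c^\infty(\Omega)$ and the $H^s(\R^n)$ norm is intrinsic, so that $\widetilde H^s(W_k)$ embeds isometrically into $\widetilde H^s(\Omega)$). A secondary point is ensuring that the bilinear pairing in the definition of $\|\Lambda_{q_1}-\Lambda_{q_2}\|_*$, written over $\R^n$, coincides with the interior $W_{k'}$-pairing — this follows from Lemma~\ref{lem:Aless}, which shows the Cauchy-data discrepancy is supported in $W_{k'}$ whenever $f_1\in\widetilde H^s(W_k)$ and $W_k, W_{k'}\subset\Omega_e$. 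With these identifications in place, the estimate follows immediately.
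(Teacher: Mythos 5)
Your proposal is correct and is exactly the ``straightforward consequence'' the paper has in mind: the finite-data quantities $\|(\Lambda_{q_1}-\Lambda_{q_2})\tilde f_j^{(k)}\|_{H^{-s}(W_{k'})}$ are bounded by $\|\Lambda_{q_1}-\Lambda_{q_2}\|_*$ by duality because the $\tilde f_j^{(k)}$ are unit vectors, and one then invokes Theorem~\ref{thm:Lip_frac_Runge}. One small correction to your bookkeeping remark: the claim that $\widetilde H^s(W_k)$ embeds into $\widetilde H^s(\Omega)$ via $C_c^\infty(W_k)\subset C_c^\infty(\Omega)$ is false, since $W_k\subset\Omega_e$ is disjoint from $\Omega$; the actual resolution is that the normalization $\|\tilde f_j^{(k)}\|_{\widetilde H^s(\Omega)}=1$ in the theorem statement is a typo for $\|\tilde f_j^{(k)}\|_{\widetilde H^s(W_k)}=1$ (as the theorem's proof makes clear, where $\tilde f_l^{(1)}=f_l^{(1)}/\|f_l^{(1)}\|_{\widetilde H^s(W_1)}$), so the norms needed for the duality step are already the right ones and no renormalization is required.
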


We note that the proof of Theorem \ref{thm:Lip_frac_Runge} is more direct than in the case of the classical Calder\'on problem. Instead of constructing singular solutions, it here suffices to argue by Runge approximation only.

In order to formulate the auxiliary results on Runge approximation in Section \ref{sec:quant_runge}, we first recall that in the setting in which zero is not a Dirichlet eigenvalue of the equation \eqref{eq:frac_Cal}, i.e., in the case that \eqref{eq:not_zero} holds, we can in particular consider the Poisson operator
\begin{align}
\label{eq:Poisson}
P_q: H^{s}_{\overline{W}} \rightarrow H^s(\R^n), \
f \mapsto u.
\end{align}
Here $u$ denotes the solution of \eqref{eq:frac_Cal} with exterior data $f$. 
 
In Section \ref{sec:finite} we then present the proof of Theorem \ref{thm:Lip_frac_Runge} and discuss several examples.

\subsection{Quantitative Runge approximation}
\label{sec:quant_runge}

The article \cite{RS17} established stability for the fractional Calder\'on problem by quantifying the uniqueness results of \cite{GSU16}. Here a crucial ingredient was a quantitative Runge approximation property which we will also rely on in the sequel:

\begin{thm}[\cite{RS17}, Theorem 1.4]
\label{thm:Runge_frac}
Let $s\in(0,1)$ and $n\in \N$.
Let $\Omega \subset \R^n$ be an open, bounded, smooth domain. Let $W \subset \Omega_e$ be an open Lipschitz domain with $\overline{\Omega}\cap \overline{W}= \emptyset$. Assume that $q\in L^{\infty}(\Omega)$ is such that $0$ is not a Dirichlet eigenvalue of $(-\D)^s + q$ in $\Omega$. 
Let $P_q$ denote the Poisson operator from \eqref{eq:Poisson} and let $r_{\Omega}$ denote the restriction operator to $\Omega$.

Then, there exist constants $C>0$, $\mu>0$ such that for any $\overline{v} \in H^{s}_{\overline{\Omega}}$ there exists $f\in H^s_{\overline{W}}$ with
\begin{align*}
\|\overline{v}- r_{\Omega} P_{s}f \|_{L^2(\Omega)} \leq \epsilon \|\overline{v}\|_{H^{s}_{\overline{\Omega}}}, \  \|f\|_{H^{s}_{\overline{W}}} \leq Ce^{C \epsilon^{-\mu}} \|\overline{v}\|_{L^2(\overline{\Omega})}.
\end{align*}
The constants $C>0$ and $\mu>0$ only depend on the geometries of $\Omega, W$ and on $n,s$, $\|q\|_{L^{\infty}(\Omega)}$.
\end{thm}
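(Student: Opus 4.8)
The plan is to quantify the qualitative Runge approximation property, which itself is a Hahn--Banach duality argument based on the weak unique continuation property of Lemma \ref{lem:WUCP}. Writing $A := r_{\Omega} P_q : H^s_{\overline{W}} \to L^2(\Omega)$, the qualitative statement is that $A$ has dense range: if $g \in L^2(\Omega)$ annihilates the range of $A$, one tests against the solution $\phi_g \in \widetilde{H}^s(\Omega)$ of the adjoint problem (which, since $q$ is real, is $((-\D)^s + q)\phi_g = g$ in $\Omega$, $\phi_g = 0$ in $\Omega_e$) and finds, using the weak formulation and the disjointness $\overline{W}\cap\overline{\Omega}=\emptyset$, that $\phi_g = 0 = (-\D)^s \phi_g$ in $W$, whence $\phi_g \equiv 0$ by Lemma \ref{lem:WUCP} and thus $g = 0$. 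To make this quantitative I would (i) compute the adjoint $A^\ast$ explicitly, (ii) replace Lemma \ref{lem:WUCP} by a quantitative unique continuation estimate with an explicit modulus, and (iii) convert that modulus into the claimed bound on the approximating control by means of a regularised dual variational problem.

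For step (i), unwinding the weak definition of the Dirichlet-to-Neumann map \eqref{eq:DtNa} together with the Alessandrini-type identity of Lemma \ref{lem:Aless} yields $A^\ast g = (-\D)^s \phi_g|_W$, where $\phi_g$ is the dual solution above. The structural point is that $\phi_g$ vanishes on all of $W$, so that smallness of $\|A^\ast g\|_{\widetilde{H}^{-s}(W)}$ encodes smallness of the whole Cauchy pair $(\phi_g|_W,\, (-\D)^s\phi_g|_W)$ on the open set $W$; moreover, since $\phi_g$ is supported in $\overline{\Omega}$ and $\overline{W}\cap\overline{\Omega}=\emptyset$, the restriction $(-\D)^s\phi_g|_W$ is an integral against a smooth kernel and is therefore as regular as needed, which (after a harmless cutoff) resolves the apparent $\widetilde{H}^{-s}(W)$ versus $H^s_{\overline{W}}$ mismatch between the target of the adjoint and the space in which the control is sought.

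For step (ii), via the Caffarelli--Silvestre extension $\phi_g$ lifts to a solution of a degenerate elliptic equation in $\R^{n+1}_+$ with a Neumann-type condition on $\R^n\times\{0\}$, and the Carleman estimates of \cite{Rue15} (in the form exploited in \cite{GSU16, RS17}) yield a propagation of smallness inequality: if $\|\phi_g\|_{H^s(\R^n)}\le 1$ and $\|\phi_g\|_{L^2(W)} + \|(-\D)^s\phi_g\|_{\widetilde{H}^{-s}(W)} \le \eta$, then $\|\phi_g\|_{L^2(\Omega)} \le \omega(\eta)$ with a logarithmic modulus $\omega(\eta)\sim|\log\eta|^{-\sigma}$, where $\sigma>0$ depends on $n,s$ and the geometry. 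Inverting $\omega$ is precisely what produces the factor $e^{C\epsilon^{-\mu}}$; combining this with the identity $g = ((-\D)^s+q)\phi_g$, which costs $s$ derivatives, accounts for the $L^2$ versus $H^s$ asymmetry in the conclusion and produces a quantitative observability estimate bounding a weak norm of $g$ in terms of $\|A^\ast g\|$.

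For step (iii), the passage from quantitative observability to a cost estimate on the control is a standard convex-duality argument (in the spirit of Fabre--Puel--Zuazua, carried out in the fractional setting in \cite{RS17}). For fixed $\overline{v}$ and $\epsilon$ one minimises over the dual variable $g$ a strictly convex, lower semicontinuous functional of the schematic form $\mathcal{J}_\epsilon(g) = \tfrac12\|A^\ast g\|_{\widetilde{H}^{-s}(W)}^2 + \epsilon\,\|\overline{v}\|_{H^s_{\overline{\Omega}}}\,\|g\| - (\overline{v}, g)_{L^2(\Omega)}$, whose coercivity is a standard consequence of the injectivity of $A^\ast$ (equivalently, of the qualitative Runge property); the unique minimiser $g_\epsilon$ satisfies an Euler--Lagrange relation exhibiting $f_\epsilon := A^\ast g_\epsilon$ (a suitable representative of it) as a control with $\|\overline{v} - A f_\epsilon\|_{L^2(\Omega)} \le \epsilon\|\overline{v}\|_{H^s_{\overline{\Omega}}}$ (from $\mathcal{J}_\epsilon(g_\epsilon) \le \mathcal{J}_\epsilon(0) = 0$), while combining the minimality inequality with the observability estimate of step (ii), after the usual interpolation, converts the logarithmic modulus into the exponential bound $\|f_\epsilon\|_{H^s_{\overline{W}}} \le C e^{C\epsilon^{-\mu}}\|\overline{v}\|_{L^2(\overline{\Omega})}$. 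The convex-analytic part is routine; the genuine obstacle is step (ii) --- establishing propagation of smallness with a quantitatively explicit modulus for the nonlocal operator, via the Caffarelli--Silvestre extension and Carleman estimates --- together with the careful bookkeeping of which Cauchy data are controlled, in which norms, and of the loss of $s$ derivatives, needed to thread this estimate through the duality of step (iii).
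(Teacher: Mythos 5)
Your outline is sound, but note first that the paper does not prove this statement at all: Theorem \ref{thm:Runge_frac} is imported verbatim from \cite{RS17}, with only the remark that it follows ``by duality to a quantitative unique continuation result.'' The closest the paper comes to an actual proof is its treatment of the zero-eigenvalue analogue, Theorem \ref{thm:quant_Runge_frac}, and there the route differs from yours in the final step. You and the paper agree on the skeleton: introduce $A=\iota r_{\Omega}P_q$, identify $A^{\ast}v$ (up to the Riesz map) with $(-\D)^s w|_{W}$ for the dual solution $w$, observe that $w=0$ on $W$ so that smallness of $A^{\ast}v$ is smallness of the full Cauchy pair, and feed this into a logarithmic propagation-of-smallness estimate of exactly the form appearing as the hypothesis of Lemma \ref{lem:control}. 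Where you diverge is in converting observability into a cost-of-control bound: you propose the Fabre--Puel--Zuazua penalized functional $\tfrac12\|A^{\ast}g\|^2+\epsilon\|\overline{v}\|_{H^{s}_{\overline{\Omega}}}\|g\|-(\overline{v},g)$, whereas the paper (following Lemma 3.3 of \cite{RS17}) uses the singular value decomposition of $A$ and the truncated regularized inverse $R_{\alpha}\overline{v}=\sum_{\sigma_j\geq\alpha}\sigma_j^{-1}\beta_j\varphi_j$, bounding the tail $r_{\alpha}$ by the quantitative UCP and optimizing in $\alpha$. These are equivalent dual mechanisms; the variational one generalizes more readily to non-Hilbertian constraints, while the spectral one makes the $\epsilon\mapsto e^{C\epsilon^{-\mu}}$ trade-off completely explicit and is what the paper actually adapts in Section \ref{sec:quant_Runge_zero}. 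Two small imprecisions in your write-up: the approximation bound $\|\overline{v}-Af_{\epsilon}\|_{L^2(\Omega)}\leq\epsilon\|\overline{v}\|_{H^{s}_{\overline{\Omega}}}$ comes from the Euler--Lagrange (subdifferential) condition at the minimizer, not from $\mathcal{J}_{\epsilon}(g_{\epsilon})\leq\mathcal{J}_{\epsilon}(0)=0$ (the latter gives the cost bound); and coercivity of the functional uses injectivity of $A^{\ast}$ \emph{together with} its compactness, not injectivity alone. Neither affects the architecture, and like the paper you ultimately defer the genuinely hard ingredient --- the Carleman-based propagation of smallness through the Caffarelli--Silvestre extension --- to \cite{Rue15, RS17}.
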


We recall that this result was obtained by duality to a quantitative unique continuation result for fractional Schrödinger equations. In the sequel, it will allow us to quantitatively approximate arbitrary functions by solutions to the fractional Schrödinger equation.

\subsection{Proof of Theorem \ref{thm:Lip_frac_Runge}}
\label{sec:finite}

With the Runge approximation result of Theorem \ref{thm:Runge_frac} at hand, in this section we address the proof of Theorem \ref{thm:Lip_frac_Runge}.

\begin{proof}[Proof of Theorem \ref{thm:Lip_frac_Runge}] 
We choose $2 m$ functions $h_{1}^{(k)},\dots,h_{m}^{(k)} \in \widetilde{H}^s(\Omega)$, $k\in\{1,2\}$, such that the matrix
\begin{align*}
M:=
\begin{pmatrix} ( g_1, h_{1}^{(1)} h_{1}^{(2)}  )_{\Omega}& \dots & ( g_m, h_{1}^{(1)} h_{1}^{(2)} )_{\Omega}\\
\vdots & \vdots & \vdots\\
 ( g_1, h_{m}^{(1)}h_{m}^{(2)} )_{\Omega} & \dots & ( g_m, h_{m}^{(1)} h_{m}^{(2)} )_{\Omega}
\end{pmatrix}
\end{align*}
is invertible. This can always be ensured by for example defining $h_{1}^{(1)}h_{1}^{(2)},\dots, h_{m}^{(1)} h_{m}^{(2)}$ to be a (regularized) approximation of the  basis $g_{1},\dots,g_m$ itself.
Then, by the Runge approximation result of Theorem \ref{thm:Runge_frac}, there exist functions $u_j^{(1)}, u_j^{(2)}$ with $j\in \{1,\dots,m\}$ such that 
the following quantitative approximation property holds: For $k \in \{1,2\}$ there exist boundary data $f_{j}^{(k)} \in H^s_{\overline{W}_k}$ and solutions $u_{j}^{(k)}= h_j^{(k)} + r_{j}^{(k)}$ of the fractional Schrödinger equations 
\begin{align*}
((-\D)^s + q_k)u_{j}^{(k)} & = 0 \mbox{ in } \Omega, \ u_{j}^{(k)} = f_{j}^{(k)} \mbox{ in } \Omega_e
\end{align*}
with
\begin{align}
\label{eq:approx}
\|u_{j}^{(k)} - h_j^{(k)}\|_{L^2(\Omega)}\leq \epsilon \|h_j^{(k)}\|_{H^s_{\overline{\Omega}}}, \ \|f_{j}^{(k)}\|_{H^{s}_{\overline{W}_k}} \leq C e^{C \epsilon^{-\mu}} \|h_{j}^{(k)}\|_{L^2(\Omega)}, \ j \in \{1,\dots, m\}, 
\end{align}
for some constants $C>1$ and $\mu>0$ (which have the dependences which were explained in Theorem \ref{thm:Runge_frac}). \\
Inserting the functions $u_j^{(k)}$ into the generalized  Alessandrini's identity from Lemma \ref{lem:Aless} then yields
\begin{align}
\label{eq:Al}
\begin{split}
((q_1 - q_2) h_j^{(1)}, h_j^{(2)})_{\Omega} 
&= -((q_1-q_2) r_j^{(1)}, h_j^{(2)})_{\Omega} 
-((q_1-q_2) r_j^{(2)}, h_j^{(1)})_{\Omega} 
-((q_1-q_2) r_j^{(2)}, r_j^{(1)})_{\Omega} \\
& \quad + ((\Lambda_{q_1}- \Lambda_{q_2})f_j^{(1)}, f_j^{(2)})_{\Omega_e}.
\end{split}
\end{align}
Thus,
\begin{align*}
\left| ((q_1 - q_2) h_j^{(1)}, h_j^{(2)})_{\Omega}  \right|
&\leq \left| ((q_1-q_2) r_j^{(1)}, h_j^{(2)})_{\Omega}  \right|
+ \left| ((q_1-q_2) r_j^{(2)}, h_j^{(2)})_{\Omega} \right|
+ \left| ((q_1-q_2) r_j^{(1)}, r_j^{(2)})_{\Omega} \right| \\
& \quad +  \| (\Lambda_{q_1}-\Lambda_{q_2}) f_j^{(1)}\|_{H^{-s}(W_2)} \|f_j^{(2)}\|_{H^{s}_{\overline{W}_2}}.
\end{align*}
Using this together with the assumption that
\begin{align*}
q_1-q_2 = \sum\limits_{j=1}^{m} a_j g_j,
\end{align*}
and the estimates from \eqref{eq:approx}, leads to $m$ inequalities of the form
\begin{align}
\label{eq:est_f}
\begin{split}
\left| \sum\limits_{j=1}^{m} a_j ( g_j ,h_{l}^{(1)}h_{l}^{(2)} )_{\Omega} \right|
&\leq C \|(\Lambda_{q_1}-\Lambda_{q_2})\tilde{f}_l^{(1)}\|_{H^{-s}(W_2)} e^{C \epsilon^{-\mu}} \|h_{l}^{(1)}\|_{L^2(\Omega)} \|h_{l}^{(2)}\|_{L^2(\Omega)} \\
& \quad + C \epsilon \|q_1-q_2\|_{L^{\infty}(\Omega)} \|h_{l}^{(1)}\|_{H^{s}_{\overline{\Omega}}}\|h_{l}^{(2)}\|_{H^{s}_{\overline{\Omega}}},\quad l \in\{1,\dots,m\},
\end{split}
\end{align}
where $\tilde{f}_l^{(1)}(x):= \frac{f_l^{(1)}(x)}{\|f_l^{(1)}\|_{\tilde{H}^s(W_1)}}$.
Abbreviating $h_{l}:= h_{l}^{(1)} h_{l}^{(2)}$ for $l\in \{1,\dots,m\}$, we observe that for each $l\in\{1,\dots,m\}$ the estimate \eqref{eq:est_f} can be rewritten as the following vector valued system of equations
\begin{align*}
\begin{pmatrix} ( g_1, h_{1} )_{\Omega}& \dots & ( g_m, h_{1} )_{\Omega}\\
\vdots & \vdots & \vdots\\
(g_1, h_{m} )_{\Omega} & \cdots & ( g_m, h_{m} )_{\Omega}
\end{pmatrix}
\begin{pmatrix}
a_1 \\ \vdots\\ a_m
\end{pmatrix} = v_1(\epsilon) + v_{2}(\epsilon),
\end{align*}
where $v_1(\epsilon), v_2(\epsilon) \in \R^n$ obey the bounds
\begin{align*}
|v_1(\epsilon)| &\leq C e^{C \epsilon^{-\mu}}  \sup\limits_{l \in \{1,\dots,m\}} \|(\Lambda_{q_1}-\Lambda_{q_2}) \tilde{f}_l^{(1)}\|_{H^{-s}(W_2)} \|h_{l}^{(1)}\|_{L^2(\Omega)} \|h_{l}^{(2)}\|_{L^2(\Omega)}\\ 
|v_2(\epsilon)| &\leq C \epsilon \|q_1-q_2\|_{L^{\infty}(\Omega)} \sup\limits_{l \in \{1,\dots,m\}}\|h_{l}^{(1)}\|_{H^{s}_{\overline{\Omega}}}\|h_{l}^{(2)}\|_{H^{s}_{\overline{\Omega}}}.
\end{align*}
By virtue of the assumed invertibility of the matrix $M$, we infer that for $a=(a_1,\dots, a_m)$
\begin{align}
\label{eq:gen}
\begin{split}
\|a\| 
&\leq C_M e^{C \epsilon^{-\mu}} \sup\limits_{l \in \{1,\dots,m\}} \|(\Lambda_{q_1}-\Lambda_{q_2}) \tilde{f}_l^{(1)}\|_{H^{-s}(W_2)}  \|h_{l}^{(1)}\|_{L^2(\Omega)} \|h_{l}^{(2)}\|_{L^2(\Omega)} \\
& \quad + C_M \epsilon \sup\limits_{l\in \{1,\dots,m\}} \|h_{l}^{(1)}\|_{H^{s}_{\overline{\Omega}}}\|h_{l}^{(2)}\|_{H^{s}_{\overline{\Omega}}}\|a\|.
\end{split}
\end{align}
Here $\|a\|:= \|a\|_{\ell^{\infty}}$; the subscript $M$ in the constant $C_M>0$ emphasizes that this constant in particular depends on the invertibility properties of the matrix $M$.
We note the last term on the right hand side of \eqref{eq:gen} can be absorbed into the left hand side of \eqref{eq:gen} provided that $C_M \epsilon \sup\limits_{l\in\{1,\dots,m\}}  \|h_{l}^{(1)}\|_{H^{s}_{\overline{\Omega}}}\|h_{l}^{(2)}\|_{H^{s}_{\overline{\Omega}}}  < 1$. 
As a consequence, setting 
\begin{align*}
L_{0}:=\sup\limits_{l\in\{1,\dots,m\}}  \|h_{l}^{(1)}\|_{H^{s}_{\overline{\Omega}}}\|h_{l}^{(2)}\|_{H^{s}_{\overline{\Omega}}}, \
L_{1}:=\sup\limits_{l\in\{1,\dots,m\}} \|h_{l}^{(1)}\|_{L^2(\Omega)} \|h_{l}^{(2)}\|_{L^2(\Omega)} ,
\end{align*}
 and choosing $\epsilon = \frac{1}{2}(C_M L_0)^{-1} $, we have 
\begin{align*}
\|a\| \leq C_M L_{1} e^{C C_M^{\mu} L_{0}^{\mu}}\|(\Lambda_{q_1}-\Lambda_{q_2}) \tilde{f}^{(1)}\|_{H^{-s}(W_2)} .
\end{align*}
Noting that $\Lambda_q$ is a symmetric operator, we observe that in \eqref{eq:Al}, we could also have reversed the roles of $f_j^{(1)}, f_j^{(2)}$, which would have led to
\begin{align*}
\|a\| \leq C_M L_{1} e^{C C_M^{\mu} L_{0}^{\mu}}\|(\Lambda_{q_1}-\Lambda_{q_2}) \tilde{f}^{(2)}\|_{H^{-s}(W_1)}. 
\end{align*}  
This concludes the argument for Theorem \ref{thm:Lip_frac_Runge}.
\end{proof}

\begin{rmk}
\label{rmk:dependences_1} 
The above proof keeps quite explicitly track of the dependences of the constants on the various parameters. It is an interesting question, to optimize the choice of the basis functions $g_j$ and the dual test functions $h_{l}^{(k)}$ for $k\in\{1,2\}$.
\end{rmk}

\begin{example}
\label{ex:pc}
As an example of Theorem \ref{thm:Lip_frac_Runge}, one may consider the case of piecewise constant potentials, which has been studied in \cite{BdHQ13} for the classical Schrödinger case. In this case we have a (up to null sets) disjoint covering $\Omega = \bigcup\limits_{j=1}^{N} D_j$ by bounded Lipschitz sets $D_j \subset \Omega$ and $g_j:= \chi_{D_j}$, where $\chi_{D_j}$ are the characteristic functions of the sets $D_j$, i.e., 
\begin{align*}
\chi_{D_j}(x):=\left\{ \begin{array}{ll} 1 \mbox{ if } x \in D_j, \\ 0 \mbox{ else}. \end{array} \right.
\end{align*}
If $s< \frac{1}{2}$ we can then choose the functions $h_j^{(k)}$ from the proof of Theorem \ref{thm:Lip_frac_Runge} to be the (normalized) characteristic functions of the domains $D_j$, while for $s\geq \frac{1}{2}$, we can for instance choose smoothed out versions of these. Choosing for $\Omega = [0,1]^n$ the sets $D_j $ to be translates of the interval $[0,N^{-1/n}]^n$ then shows that the constant $L_0^{\mu}$ is proportional to $N^{\tilde{\mu}}$ for some constant $\tilde{\mu}>0$. This is consistent with the results of \cite{Ron06} (c.f. also Section \ref{sec:opti}).
\end{example}

\begin{example}
\label{ex:pl}
From an applications point of view another interesting example of a system of basis functions $\{g_1,\dots, g_m\}$ corresponds to piecewise affine basis functions, as discussed in \cite{AdHGS17} for the classical Schrödinger case. Considering a partition of $\Omega$ as in Example \ref{ex:pc}, we here have $m=Nn$ and
\begin{align*}
\{g_1, \dots, g_m\}= \{\chi_{D_1}, \dots, \chi_{D_N}, x_1 \chi_{D_1}, \dots, x_1 \chi_{D_N}, x_2 \chi_{D_2}, \dots, x_2 \chi_{D_2}, \dots, x_n \chi_{D_N}, \dots, x_n \chi_{D_N}\}.
\end{align*}
Another canonical set of basis functions $\{g_1,\dots, g_m\}$ for instance consists of a subset of the trigonometric functions.
\end{example}

\begin{rmk}
\label{rmk:rough}
We emphasize that the choice of the space $q\in L^{\infty}(\Omega)$ does not play a major role in our arguments. Instead of using the $L^{\infty}$ based Runge approximation results from above, it would also have been possible to work with the space $Z^{-s}_0(\R^n)$ and the approximation results from Lemmas 8.1 and 8.2 in \cite{RS17}. This would for instance have allowed us to treat potentials $q$ in finite dimensional subspaces of $L^{\frac{n}{2s}}(\Omega)$. Since there are no major changes in our arguments (except for having to rely on the stronger results of Lemmas 8.1 and 8.2 from \cite{RS17} instead of Theorem \ref{thm:Runge_frac}) and since such a more general set-up would introduce additional technicalities, we have opted against formulating the corresponding results.
\end{rmk}

\section{Presence of Zero Eigenvalue}
\label{sec:zero}

In the case in which zero is a Dirichlet eigenvalue of the problem \eqref{eq:frac_Cal}, i.e. in the case in which \eqref{eq:not_zero} is violated, we have to modify the functional analytic set-up of the argument from Section \ref{sec:non_zero} slightly. In particular, we
can no longer work with the Dirichlet-to-Neumann map, but have to consider the Cauchy data associated with the problem. 

Let us make this more precise: For $\Omega \subset \R^n$ open, bounded and smooth and $W \subset \Omega_e$ open and Lipschitz, we consider the inhomogeneous Dirichlet problem
\begin{align}
\label{eq:inhom_Dir}
\begin{split}
((-\D)^s + q) u & = F \mbox{ in } \Omega,\\
u & = f \mbox{ in } \Omega_e,
\end{split}
\end{align}
with $F \in (\widetilde{H}^{s}(\Omega))^{\ast}=H^{-s}(\Omega)$, $f\in \widetilde{H}^s(W)$.
By virtue of the results of Grubb \cite{Gr15} (which are formulated for smooth domains, hence our restriction to this class of domains), the inhomogeneous, zero Dirichlet data problem
\begin{align}
\label{eq:inhom_Dir_a}
\begin{split}
((-\D)^s + q) \tilde{u} & = \tilde{F} \mbox{ in } \Omega,\\
\tilde{u} & = 0 \mbox{ in } \Omega_e,
\end{split}
\end{align}
with $\tilde{F} \in H^{-s}(\Omega)$ gives rise to a Fredholm operator
\begin{align*}
\tilde{T}: \widetilde{H}^s(\Omega) \ni \tilde{u} \mapsto \tilde{F} \in H^{-s}(\Omega).
\end{align*}
Due to the observations in Lemma 2.3 in \cite{GSU16}, this operator is of index zero (indeed, if $q=0$, it is of index zero and multiplication with $q \in L^{\infty}(\Omega)$ is a compact perturbation, which does not change the index of $\tilde{T}$). We note that the problem \eqref{eq:inhom_Dir} can be reduced to \eqref{eq:inhom_Dir_a} by extending $f$ by zero and defining $\tilde{u}:= u- e_0 f$, which leads to $\tilde{F}=F-(-\D)^s (e_0 f)$. Here $e_0$ denotes the extension operator by zero. We have used that $\widetilde{H}^s(W)=H^s_{\overline{W}}$ for the Lipschitz domain $W \subset \Omega_e$.

The reduction of \eqref{eq:inhom_Dir} to \eqref{eq:inhom_Dir_a} and the results from \cite{Gr15} allow us to invoke the Fredholm alternative (c.f. for instance Theorem 2.27 in \cite{McLean}). In particular, this entails that there exists a finite dimensional space
\begin{align}
\label{eq:eigen}
Z_2:=\{z \in \widetilde{H}^s(\Omega): \ ((-\D)^s + q) z = 0 \mbox{ in } \Omega\} \subset \widetilde{H}^s(\Omega) \subset L^2(\Omega),
\end{align}
such that the problem \eqref{eq:inhom_Dir} is solvable, if and only if
\begin{align}
\label{eq:solvability}
(F,z)_{\Omega} + (f,(-\D)^s z)_{W} = 0 \mbox{ for all } z \in Z_2.
\end{align}
Here we used that $\supp(f) \subset \overline{W}$ and view the identity \eqref{eq:solvability} as a duality pairing (in the usual sense, c.f. for instance Theorem 3.14 in \cite{McLean}). We remark that since $\widetilde{H}^{s}(\Omega)= H^s_{\overline{\Omega}}$, we can also interpret $Z_2$ as a subset of $H^{s}(\R^n)$ (and not only of $H^{s}(\Omega)$).
Seeking to define the operator $A:= \iota r_{\Omega}( P_{q}-Id)$ in analogy to the operator from \cite{RS17}, where 
\begin{itemize}
\item $r_{\Omega}$ denotes the restriction operator onto $\Omega$,
\item $\iota$ is the (compact) inclusion operator of $H^{s}(\R^n)$ into $L^2(\Omega)$, 
\item $P_q$ is a Poisson type operator for \eqref{eq:inhom_Dir} (which is defined in \eqref{eq:Pq} below),
\item and $Id$ denotes the identity operator, 
\end{itemize}
the following functional analytic set-up arises naturally: Motivated by the solvability properties of \eqref{eq:inhom_Dir} for a fixed open Lipschitz set $W \subset \Omega_e$, we introduce the following orthogonal decompositions
\begin{align*}
\widetilde{H}^{s}(W)= Z_1 \perp H_1, \ L^2(\Omega) = Z_2 \perp H_2.
\end{align*}
Here $Z_2$ is the function space from \eqref{eq:eigen}, while $Z_1$ is defined as the orthogonal complement of
\begin{align}
\label{eq:eigen_1}
H_{1}= \{y \in \widetilde{H}^{s}(W): \ (y,(-\D)^s z)_{W}=0, \ z \in Z_2 \} \subset \widetilde{H}^s(W).
\end{align}
By the Fredholm alternative, both $Z_1$ and $Z_2$ are finite dimensional.

In this setting the Poisson operator to the problem \eqref{eq:inhom_Dir} with $F=0$ is now defined as the mapping
\begin{align}
\label{eq:Pq}
P_q: H_1 \rightarrow H^s(\R^n)/Z_2, \ f \mapsto [u].
\end{align}
Here $u$ solves \eqref{eq:inhom_Dir} with $F=0$ and boundary data $f\in H_1$ and $H^s(\R^n)/Z_2$ denotes the quotient space of $H^{s}(\R^n)$ and $Z_2$ (for whose definition we use that $Z_2 \subset H^s(\R^n)$). The notation $[u]$ denotes the equivalence class in $H^s(\R^n)/Z_2$ containing the element $u$. By an argument as in the proof of Lemma 2.3 in \cite{GSU16} (now formulated in the quotient space $H^s(\R^n)/Z_2$) the operator $P_q$ is bounded and linear.

Having established this notation, by Sobolev embedding, we define the compact, linear operator
\begin{align*}
A: H_1  \subset \widetilde{H}^{s}(W) \rightarrow H_2 \subset L^2(\Omega), \ 
f \mapsto \iota r_{\Omega} P_q f,
\end{align*}
which by the previous considerations is a well-defined operator. With slight abuse of notation we have here identified $H_2$ with $L^2(\Omega)/Z_2$ (in particular, with this identification, we do not use the notation of equivalence classes any longer but simply write $u$ instead of $[u]$). We remark that by the disjointness of the sets $\Omega, W$ the operator $A$ can also be written as $A f = \iota r_{\Omega} (P_qf -f)$. Its Banach space adjoint $A^t$ can be computed to be $A^t v = (-\D)^s w|_{W}$, where $v \in H_2$ and $w \in H_2$ are related through the dual equation
\begin{align}
\label{eq:dual}
\begin{split}
((-\D)^s + q) w & = v \mbox{ in } \Omega,\\
w & = 0 \mbox{ in } \Omega_e.
\end{split}
\end{align}
We remark that the dual problem is solvable, since $v \in H_2$. In order to have well-posedness, as in the definition of $P_q$, we have also chosen to define the solution map to $\eqref{eq:dual}$ to involve the orthogonal projection onto $H_2$, whence $w\in H_2$. We will always use this implicitly in the sequel.

Hence, by general functional analysis the Hilbert space adjoint of $A$ is given by
\begin{align*}
A^{\ast}: H_2 \rightarrow H_1 ,\
v \mapsto R_{s}^{-1}(-\D)^s w|_{W},
\end{align*}
where $R_s$ denotes the Riesz isomorphism from $\widetilde{H}^{s}(W)$ to $H^{-s}(W)$. 

Due to the presence of zero as an eigenvalue to \eqref{eq:frac_Cal}, we can no longer define the Dirichlet-to-Neumann operator as in \eqref{eq:DtN}. Instead we consider the \emph{Cauchy data} set associated with the problem \eqref{eq:frac_Cal}. For the potentials $q_i \in L^{\infty}(\Omega)$ with $i\in\{1,2\}$ and a set $W \subset \Omega_e$ this amounts to considering the sets
\begin{align*}
\mathcal{C}_i
& := \{(f,g) \in \widetilde{H}^{s}(W)\times H^{-s}(W): f=u_i|_{W}, \ g= (-\D)^s u_i|_{W} \\
& \quad \quad \mbox{ for some solution $u_i \in H^{s}(\R^n)$ to \eqref{eq:frac_Cal} with potential $q_i$} \}.
\end{align*}
Equipped with the $\widetilde{H}^{s}(W) \times H^{-s}(W)$ norm, which for convenience we abbreviate by a subindex $H$, the Cauchy data form a closed subspace of the Hilbert space $\widetilde{H}^{s}(W) \times H^{-s}(W)$. We define the distance of the two Cauchy data sets to be
\begin{align*}
d(\mathcal{C}_1, \mathcal{C}_2)
= \max\left\{ \sup\limits_{h\in \mathcal{C}_2, h \neq 0} \inf\limits_{k \in \mathcal{C}_1} \frac{\|h-k\|_{H}}{\|h\|_{H}},  \sup\limits_{h\in \mathcal{C}_1, h \neq 0} \inf\limits_{k \in \mathcal{C}_2} \frac{\|h-k\|_{H}}{\|h\|_{H}} \right\} .
\end{align*} 
We recall that if zero is not an eigenvalue of the fractional Schrödinger operator, this distance is comparable to the norm of the difference of the Dirichlet-to-Neumann maps $\Lambda_{q_1}$, $\Lambda_{q_2}$.

In presenting the analogue of the results of Section \ref{sec:non_zero} in the outlined more general set-up in which zero is allowed to be an eigenvalue of \eqref{eq:frac_Cal}, we begin by  formulating the full data analogue of Theorem \ref{thm:Lip_frac_Runge}.

\begin{thm}
\label{thm:Lip_frac_Runge_eigen}
Let $n,m \in \N$ and $s\in(0,1)$.
Let $\Omega \subset \R^n$ be an open, bounded, smooth domain. Suppose that $W_1, W_2 \subset \Omega_e$ are Lipschitz with $\overline{W_j} \cap \overline{\Omega} = \emptyset$ for $j\in \{1,2\}$.
Let $q_1,q_2 \in L^{\infty}(\Omega)$ with
\begin{align*}
q_1, q_2 \in \spa\{g_1,\dots,g_m\} \mbox{ and } \|q_1\|_{L^{\infty}(\Omega)}, \|q_{2}\|_{L^{\infty}(\Omega)} \leq C_0 <\infty,
\end{align*}
for some orthonormal (w.r.t. the $L^2(\Omega)$ scalar product) functions $g_1,\dots, g_m \in L^{\infty}(\Omega)$.
Then there exists a constant $C_2>0$, which depends only on the geometry of the domains $W_1, W_2$, on the functions $g_1,\dots,g_m$ and on the constant $C_0>0$, 
such that
\begin{align*}
\|q_1-q_2\|_{L^{\infty}(\Omega)} \leq C_2 d(\mathcal{C}_1, \mathcal{C}_2).
\end{align*} 
\end{thm}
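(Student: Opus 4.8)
The plan is to retrace the proof of Theorem~\ref{thm:Lip_frac_Runge} essentially line by line, replacing two of its ingredients by the versions appropriate to the situation in which zero may be a Dirichlet eigenvalue: the quantitative Runge approximation of Theorem~\ref{thm:Runge_frac} is replaced by its analogue for potentials admitting a zero eigenvalue (the content of Sections~\ref{sec:qual_Runge_zero} and~\ref{sec:quant_Runge_zero}), and the Dirichlet-to-Neumann map together with Lemma~\ref{lem:Aless} are replaced by the Cauchy data set and a Cauchy-data version of Alessandrini's identity. Concretely, the Runge approximation I will use reads: for each $k\in\{1,2\}$, for every $\bar v\in H^s_{\overline{\Omega}}$ and every $\epsilon\in(0,1)$ there exist an exterior datum $f\in H^s_{\overline{W}_k}$ and a solution $u$ of $((-\D)^s+q_k)u=0$ in $\Omega$, $u=f$ in $\Omega_e$, with
\begin{align*}
\|\bar v - r_{\Omega} u\|_{L^2(\Omega)} \leq \epsilon\|\bar v\|_{H^s_{\overline{\Omega}}}, \qquad \|f\|_{H^s_{\overline{W}_k}} + \|u\|_{H^s(\R^n)} \leq C e^{C\epsilon^{-\mu}}\|\bar v\|_{L^2(\Omega)},
\end{align*}
where $C,\mu>0$ depend only on $\Omega$, $W_k$, $n$, $s$ and on $\|q_k\|_{L^\infty(\Omega)}\leq C_0$. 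Qualitatively this relies on the fact that the set of restrictions $r_\Omega u$ of solutions $u$ with potential $q_k$ and exterior data in $\widetilde{H}^s(W_k)$ is dense in $L^2(\Omega)$: on the orthogonal complement $H_2$ of the eigenspace $Z_2$ from \eqref{eq:eigen} this is the density of $\{r_\Omega P_{q_k}f : f\in H_1\}$, which follows from Lemma~\ref{lem:WUCP} applied to the dual problem \eqref{eq:dual} (as the Banach adjoint of $A$ is injective), while the component of the target in $Z_2$ is matched \emph{exactly} by the corresponding eigenfunction, which has vanishing exterior data; quantitatively one dualizes this against the unique continuation estimate behind Theorem~\ref{thm:Runge_frac}, treating the finite-dimensional obstruction $Z_1\oplus Z_2$ by hand, exactly as in Sections~\ref{sec:qual_Runge_zero} and~\ref{sec:quant_Runge_zero}.

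For the second ingredient, fix an open Lipschitz set $W\subseteq\Omega_e$ with $\overline W\cap\overline\Omega=\emptyset$ and $W_1\cup W_2\subseteq W$, relative to which the Cauchy data sets $\mathcal C_1,\mathcal C_2$ are formed. Given solutions $u_1,u_2$ of \eqref{eq:frac_Cal} with potentials $q_1,q_2$ (whose exterior data we may take supported in $\overline W$), testing the equation for $u_1$ against the zero-extension of $u_2|_{\Omega_e}$ and vice versa, and using $\overline W\cap\overline\Omega=\emptyset$ to kill the potential terms, yields the identity
\begin{align*}
((q_1-q_2)u_1,u_2)_{\Omega} = \langle (-\D)^s u_1|_W,\, u_2|_W\rangle_W - \langle (-\D)^s u_2|_W,\, u_1|_W\rangle_W =: \beta\bigl(\mathrm{Cd}(u_1),\mathrm{Cd}(u_2)\bigr),
\end{align*}
where $\mathrm{Cd}(u):=(u|_W,(-\D)^s u|_W)$ and $\beta\bigl((f,g),(f',g')\bigr):=\langle g,f'\rangle_W-\langle g',f\rangle_W$ is a bounded antisymmetric bilinear form on $\bigl(\widetilde H^s(W)\times H^{-s}(W)\bigr)^2$ which vanishes whenever both of its arguments lie in the same Cauchy data set $\mathcal C_i$. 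Picking, for $\mathrm{Cd}(u_1)\in\mathcal C_1$, an element of $\mathcal C_2$ at relative $\widetilde H^s(W)\times H^{-s}(W)$-distance at most $d(\mathcal C_1,\mathcal C_2)$ and using the vanishing of $\beta$ on $\mathcal C_2\times\mathcal C_2$, one obtains
\begin{align*}
\bigl|((q_1-q_2)u_1,u_2)_{\Omega}\bigr| \leq 2\, d(\mathcal C_1,\mathcal C_2)\,\|\mathrm{Cd}(u_1)\|_{H}\,\|\mathrm{Cd}(u_2)\|_{H},
\end{align*}
where $\|\mathrm{Cd}(u)\|_H$ is controlled by the $H^s(\R^n)$-norms of $u$ and of its exterior datum. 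This is the substitute for inserting the Runge solutions into Lemma~\ref{lem:Aless}.

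With these two ingredients in hand, the remaining argument is that of Theorem~\ref{thm:Lip_frac_Runge}. Choose $h^{(k)}_1,\dots,h^{(k)}_m\in\widetilde H^s(\Omega)$, $k\in\{1,2\}$, so that the matrix $M=\bigl((g_j,h^{(1)}_l h^{(2)}_l)_\Omega\bigr)_{l,j}$ is invertible, e.g. by taking $h^{(1)}_l h^{(2)}_l$ to be a regularized approximation of $g_l$. Apply the Runge approximation above with $\bar v=h^{(k)}_l$ and control set $W_k$ to get solutions $u^{(k)}_l=h^{(k)}_l+r^{(k)}_l$ with exterior data $f^{(k)}_l\in H^s_{\overline{W}_k}$, $\|r^{(k)}_l\|_{L^2(\Omega)}\leq\epsilon\|h^{(k)}_l\|_{H^s_{\overline\Omega}}$ and $\|\mathrm{Cd}(u^{(k)}_l)\|_H\leq Ce^{C\epsilon^{-\mu}}\|h^{(k)}_l\|_{L^2(\Omega)}$. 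Writing $q_1-q_2=\sum_{j=1}^m a_j g_j$, inserting $u^{(1)}_l,u^{(2)}_l$ into the Cauchy-data identity and expanding $((q_1-q_2)(h^{(1)}_l+r^{(1)}_l),\, h^{(2)}_l+r^{(2)}_l)_\Omega$ produces the linear system $Ma=v_1(\epsilon)+v_2(\epsilon)$ with
\begin{align*}
|v_1(\epsilon)|\leq C e^{C\epsilon^{-\mu}}\, d(\mathcal C_1,\mathcal C_2)\, L_1, \qquad |v_2(\epsilon)|\leq C\epsilon\,\|q_1-q_2\|_{L^\infty(\Omega)}\, L_0,
\end{align*}
with $L_0=\sup_l\|h^{(1)}_l\|_{H^s_{\overline\Omega}}\|h^{(2)}_l\|_{H^s_{\overline\Omega}}$ and $L_1=\sup_l\|h^{(1)}_l\|_{L^2(\Omega)}\|h^{(2)}_l\|_{L^2(\Omega)}$. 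Inverting $M$, using $\|q_1-q_2\|_{L^\infty(\Omega)}\leq C\|a\|_{\ell^\infty}$ (the $g_j$ being fixed elements of $L^\infty(\Omega)$), absorbing the $v_2$-term into the left-hand side by fixing $\epsilon$ of the order $(C_M L_0)^{-1}$ (where $C_M$ quantifies the inverse of $M$, as in the proof of Theorem~\ref{thm:Lip_frac_Runge}), one concludes $\|q_1-q_2\|_{L^\infty(\Omega)}\leq C_2\, d(\mathcal C_1,\mathcal C_2)$ with $C_2$ depending only on $W_1,W_2$, on $g_1,\dots,g_m$ (through $M,L_0,L_1$) and on $C_0$; this is carried out in Section~\ref{sec:Proof_zero}.

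The one genuinely new step, and hence the main obstacle, is the quantitative Runge approximation in the presence of a zero eigenvalue; the rest is the scheme of Theorem~\ref{thm:Lip_frac_Runge} transcribed to the Cauchy data. Two points there need care. First, the Poisson operator $P_{q_k}$ is only defined modulo the finite-dimensional eigenspace $Z_2$ (cf. \eqref{eq:Pq}), and \eqref{eq:inhom_Dir} is solvable only subject to the finitely many orthogonality conditions \eqref{eq:solvability}, so one has to isolate the finite-dimensional obstruction via the decompositions $\widetilde H^s(W)=Z_1\perp H_1$, $L^2(\Omega)=Z_2\perp H_2$, run the duality argument in the quotient, and then reinsert the eigenfunctions to recover all of $L^2(\Omega)$ — an operation that is free on the exterior datum $f$ and costs only a controlled amount on $\|u\|_{H^s(\R^n)}$ because $Z_2$ is finite-dimensional. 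Second, uniformity: since $q_1,q_2$ range over the bounded family $\{q\in\spa\{g_1,\dots,g_m\}:\|q\|_{L^\infty(\Omega)}\leq C_0\}$, on which the dimension of $Z_2$ may jump, one needs the constants in the Runge estimate to depend on the potential only through its $L^\infty(\Omega)$-norm; this is exactly how the result of Section~\ref{sec:quant_Runge_zero} is formulated, and it is what makes the finite-dimensionality and boundedness hypotheses of the theorem enter.
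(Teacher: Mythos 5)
Your proposal is correct and follows essentially the same route as the paper: extend the qualitative and quantitative Runge approximation to the zero-eigenvalue case by isolating the finite-dimensional kernel $Z_2$ by hand (Lemma \ref{lem:qual_Runge_zero} and Theorem \ref{thm:quant_Runge_frac}), replace Alessandrini's identity by its Cauchy-data version (Lemma \ref{lem:Aless_1}), and rerun the linear-system and absorption argument of Theorem \ref{thm:Lip_frac_Runge}. The only cosmetic deviation is that you control the Cauchy-data norm of the Runge solutions through $\|u\|_{H^s(\R^n)}$, whereas the paper bounds $\|(-\D)^s(u+z)\|_{H^{-s}(W)}$ directly via the kernel representation of the fractional Laplacian on the separated set $W$; both work.
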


\begin{rmk}
\label{rmk:dependences_3} 
As in the case of Theorem \ref{thm:Lip_frac_Runge}, the dependence of $C_2$ on $N$ can be made quite explicit. It is essentially determined by the geometry of the domains $W_1, W_2$, by the constant $C_0>0$ and by the choice of the functions $g_1,\dots,g_m$.
\end{rmk}

Similarly, as in Theorem \ref{thm:Lip_frac_Runge}, it is also possible to formulate a \emph{finite} measurement result for the setting in which zero is allowed to be an eigenvalue of the fractional Schrödinger equation \eqref{eq:frac_Cal}. In order to formulate this, we introduce a suitable \emph{finite measurement Cauchy data set}, which is a finite subset of the full Cauchy data set:
For given functions $p_1,\dots, p_m \in \widetilde{H}^s(W)$ and $m\in \N$ we set $p:=(p_1,\dots, p_m) \in (\widetilde{H}^s(W))^n$ and define 
\begin{align*}
\mathcal{C}_i(p)
&:=\{(p_l,g) \in \{p_1,\dots,p_m\} \times H^{-s}(W) \subset \widetilde{H}^s(W) \times H^{-s}(W), \ (p_l,g)\in \mathcal{C}_i \}
 \subset \mathcal{C}_i.
\end{align*}
As in \cite{AdHGS17} we measure the distance between finite sets of Cauchy data by the following quantity:
\begin{align*}
d(\mathcal{C}_1(p), \mathcal{C}_2(p))
= \max\left\{ \sup\limits_{h\in \mathcal{C}_2(p), h \neq 0} \inf\limits_{k \in \mathcal{C}_1(p)} \frac{\|h-k\|_{H}}{\|h\|_{H}},  \sup\limits_{h\in \mathcal{C}_1(p), h \neq 0} \inf\limits_{k \in \mathcal{C}_2(p)} \frac{\|h-k\|_{H}}{\|h\|_{H}} \right\}.
\end{align*} 

With this notation at hand, the finite Cauchy data result can be formulated as follows:

\begin{prop}
\label{prop:finite_Cauchy}
Under the hypotheses of Theorem \ref{thm:Lip_frac_Runge_eigen} there exist
\begin{itemize}
\item a constant $C_2>0$, which depends only on the geometry of the domains $W_1, W_2$, on the functions $g_1,\dots,g_m$ and on the constant $C_0>0$,
\item functions $f_1^{(k)},\dots, f_m^{(k)} \in \tilde{H}^s(W_k)$ with $k\in \{1,2\}$,
\end{itemize}
such that
\begin{align*}
\|q_1-q_2\|_{L^{\infty}(\Omega)} \leq C_2 d(\mathcal{C}_1(f^{(1)}), \mathcal{C}_2(f^{(2)})). 
\end{align*}  
Here $f^{(k)}:=(f_1^{(k)}, \dots, f_m^{(k)})$, where $k\in \{1,2\}$.
\end{prop}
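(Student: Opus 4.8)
The plan is to run the argument behind Theorem \ref{thm:Lip_frac_Runge} with only two modifications forced by the possible presence of a zero eigenvalue: the Dirichlet-to-Neumann pairing is replaced by the Cauchy data pairing, and Theorem \ref{thm:Runge_frac} is replaced by its analogue for $(-\D)^s+q_k$ with a possible zero Dirichlet eigenvalue, i.e.\ the quantitative Runge approximation result developed in Sections \ref{sec:qual_Runge_zero}, \ref{sec:quant_Runge_zero}. First, exactly as in Section \ref{sec:finite}, I would fix $2m$ auxiliary functions $h_l^{(k)}\in\widetilde H^s(\Omega)$, $k\in\{1,2\}$, $l\in\{1,\dots,m\}$, so that the matrix $M=\bigl((g_j,h_l^{(1)}h_l^{(2)})_\Omega\bigr)_{l,j=1}^m$ is invertible (e.g.\ with $h_l^{(1)}h_l^{(2)}$ a regularized approximation of $g_l$), and I would set $W:=W_1\cup W_2$, enlarged slightly to a Lipschitz open set with $\overline{W}\cap\overline{\Omega}=\emptyset$. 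On $\widetilde H^s(W)\times H^{-s}(W)$ I introduce the Cauchy pairing
\[
\beta\bigl((p_1,\psi_1),(p_2,\psi_2)\bigr):=(\psi_1,p_2)_W-(p_1,\psi_2)_W .
\]
An integration by parts as in Lemma \ref{lem:Aless} shows that for any solutions $u_1,u_2$ of \eqref{eq:frac_Cal} with potentials $q_1,q_2$ and exterior data $f_1,f_2$ supported in $\overline{W}$ one has $((q_1-q_2)u_1,u_2)_\Omega=\beta\bigl((f_1,(-\D)^su_1|_W),(f_2,(-\D)^su_2|_W)\bigr)$, and specializing to $q_1=q_2$ shows that $\beta$ vanishes identically on $\mathcal C_i\times\mathcal C_i$, hence a fortiori on $\mathcal C_i(f^{(i)})\times\mathcal C_i(f^{(i)})$, for $i\in\{1,2\}$. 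This vanishing is the structural fact that replaces the availability of the Dirichlet-to-Neumann map.

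Next I would invoke the quantitative Runge approximation for $(-\D)^s+q_k$ in the presence of a zero eigenvalue: for each $k\in\{1,2\}$, each $l$, and each small $\epsilon>0$ there exist an exterior datum $f_l^{(k)}$ in the solvability-compatible subspace of $\widetilde H^s(W_k)$ and a solution $u_l^{(k)}=h_l^{(k)}+r_l^{(k)}$ of \eqref{eq:frac_Cal} with potential $q_k$ and exterior data $f_l^{(k)}$ satisfying
\[
\|r_l^{(k)}\|_{L^2(\Omega)}\le\epsilon\|h_l^{(k)}\|_{H^s_{\overline{\Omega}}},\qquad\|f_l^{(k)}\|_{\widetilde H^s(W_k)}\le Ce^{C\epsilon^{-\mu}}\|h_l^{(k)}\|_{L^2(\Omega)} .
\]
Here the finite-dimensional eigenspace $Z_2^{(k)}=\{z\in\widetilde H^s(\Omega):((-\D)^s+q_k)z=0\text{ in }\Omega\}$ is treated by hand: its elements are themselves solutions with vanishing exterior data, so one may correct the approximant on $Z_2^{(k)}$ without affecting the bound on $f_l^{(k)}$. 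The tuples $f^{(k)}:=(f_1^{(k)},\dots,f_m^{(k)})$ are the data claimed in the statement, and by construction $(f_l^{(1)},(-\D)^su_l^{(1)}|_W)\in\mathcal C_1(f^{(1)})$ and $(f_l^{(2)},(-\D)^su_l^{(2)}|_W)\in\mathcal C_2(f^{(2)})$.

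The remaining steps mirror Section \ref{sec:finite} (and run in parallel to the proof of Theorem \ref{thm:Lip_frac_Runge_eigen} in Section \ref{sec:Proof_zero}). For each fixed $l$, using the definition of $d(\mathcal C_1(f^{(1)}),\mathcal C_2(f^{(2)}))$ I would pick an element of $\mathcal C_1(f^{(1)})$ lying within relative distance $d(\mathcal C_1(f^{(1)}),\mathcal C_2(f^{(2)}))$ of the Cauchy pair of $u_l^{(2)}$, subtract it off, and use the vanishing of $\beta$ on $\mathcal C_1\times\mathcal C_1$; combined with the well-posedness estimate and the boundedness of the Poisson operator from \eqref{eq:Pq} (which bound the Cauchy norm of $u_l^{(k)}$ by $Ce^{C\epsilon^{-\mu}}\|h_l^{(k)}\|_{L^2(\Omega)}$), this gives
\[
\bigl|((q_1-q_2)u_l^{(1)},u_l^{(2)})_\Omega\bigr|\le C\,d(\mathcal C_1(f^{(1)}),\mathcal C_2(f^{(2)}))\,e^{C\epsilon^{-\mu}}\|h_l^{(1)}\|_{L^2(\Omega)}\|h_l^{(2)}\|_{L^2(\Omega)} .
\]
Then, writing $q_1-q_2=\sum_{j=1}^m a_jg_j$, expanding $((q_1-q_2)u_l^{(1)},u_l^{(2)})_\Omega$ through $u_l^{(k)}=h_l^{(k)}+r_l^{(k)}$, and estimating the three remainder terms by $C\epsilon\|q_1-q_2\|_{L^\infty(\Omega)}$ times products of $H^s_{\overline{\Omega}}$-norms of the $h_l^{(k)}$, I would arrive exactly as in \eqref{eq:est_f}--\eqref{eq:gen} at a linear system $Ma=v_1(\epsilon)+v_2(\epsilon)$ with $|v_1(\epsilon)|\le Ce^{C\epsilon^{-\mu}}L_1\,d(\mathcal C_1(f^{(1)}),\mathcal C_2(f^{(2)}))$ and $|v_2(\epsilon)|\le C\epsilon L_0\|q_1-q_2\|_{L^\infty(\Omega)}$, with $L_0,L_1$ as in Section \ref{sec:finite}. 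Inverting $M$, absorbing $v_2(\epsilon)$ into the left-hand side by fixing $\epsilon$ depending only on the invertibility of $M$ and on $L_0$, and passing from $\|a\|_{\ell^\infty}$ back to $\|q_1-q_2\|_{L^\infty(\Omega)}$ (with a constant depending on $g_1,\dots,g_m$) yields the asserted estimate, with $C_2$ of the stated dependences.

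The genuinely hard part is not this bookkeeping but the ingredient it relies on, namely producing the quantitative Runge approximation when zero is a Dirichlet eigenvalue: one must cope with the finite-dimensional spaces $Z_1^{(k)},Z_2^{(k)}$, with the Poisson operator being defined only on the solvability-compatible subspace and only modulo $Z_2^{(k)}$, and with the Cauchy data no longer being the graph of a Dirichlet-to-Neumann map. This is precisely what Sections \ref{sec:qual_Runge_zero}, \ref{sec:quant_Runge_zero} establish, by extending the duality-to-unique-continuation argument underlying Theorem \ref{thm:Runge_frac} through a finite-dimensional correction. Granting it, the only new observation relative to Section \ref{sec:finite} is the vanishing of the Cauchy pairing $\beta$ on each $\mathcal C_i(f^{(i)})\times\mathcal C_i(f^{(i)})$, which is exactly what lets the finite Cauchy data distance take over the role played there by the Dirichlet-to-Neumann discrepancy.
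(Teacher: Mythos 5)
Your overall strategy coincides with the paper's: the same choice of the $h_l^{(k)}$ and of the matrix $M$, the replacement of Alessandrini's identity by a Cauchy-data pairing (your bilinear form $\beta$ and its vanishing on $\mathcal{C}_i\times\mathcal{C}_i$ is precisely the content of Lemma \ref{lem:Aless_1} and Remark \ref{rmk:Cauchy_data}), the use of the zero-eigenvalue Runge approximation of Theorem \ref{thm:quant_Runge_frac} in place of Theorem \ref{thm:Runge_frac}, and the same linear-system/absorption endgame.

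There is, however, one concrete gap: the parenthetical claim that ``the well-posedness estimate and the boundedness of the Poisson operator from \eqref{eq:Pq}'' bound the Cauchy norm of your approximating solution by $Ce^{C\epsilon^{-\mu}}\|h_l^{(k)}\|_{L^2(\Omega)}$. The Poisson operator \eqref{eq:Pq} is bounded only into the quotient $H^s(\R^n)/Z_2$, whereas the Neumann part $(-\D)^s(u_l^{(k)}+z_l^{(k)})|_{W}$ of the Cauchy datum is \emph{not} invariant under adding elements of $Z_2$: such $z$ vanish on $W$, but $(-\D)^s z|_{W}$ does not (otherwise $z\equiv 0$ by Lemma \ref{lem:WUCP}). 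So you must separately control the size of the particular $Z_2$-correction selected by the Runge approximation. The paper does exactly this in \eqref{eq:u1}--\eqref{eq:combi}: the $L^2(\Omega)$-orthogonality of $u_l^{(k)}\in H_2$ and $z_l^{(k)}\in Z_2$, combined with $\|r_l^{(k)}\|_{L^2(\Omega)}\le\epsilon\|h_l^{(k)}\|_{H^{s}_{\overline{\Omega}}}$, yields $\|u_l^{(k)}+z_l^{(k)}\|_{L^2(\Omega)}\le 2\|h_l^{(k)}\|_{L^2(\Omega)}$, and the explicit kernel representation of $(-\D)^s$ applied to the compactly supported function $u_l^{(k)}+z_l^{(k)}-f_l^{(k)}$, evaluated on $W$ at positive distance from $\Omega$, converts this $L^2(\Omega)$ bound into the required $H^{-s}(W)$ bound. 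Your argument can be completed once this estimate is inserted; without it the factor $\|u_l^{(1)}+z_l^{(1)}\|_H\,\|u_l^{(2)}+z_l^{(2)}\|_H$ appearing in the Cauchy-data version of Alessandrini's inequality is not controlled, and the asserted bound on $v_1(\epsilon)$ does not follow.
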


In the sequel, we derive these results by slight adaptations of the arguments from Section \ref{sec:non_zero}. In particular, this necessitates derivations of analogues of the qualitative and quantitative Runge approximation property, which are derived in Sections \ref{sec:qual_Runge_zero} and \ref{sec:quant_Runge_zero}. Relying on these auxiliary results, which might be interesting on their own right, we then present the argument for Theorem \ref{thm:Lip_frac_Runge_eigen} in Section \ref{sec:Proof_zero}.

\subsection{Runge approximation}

In order to derive the stability properties claimed above, we extend the Runge approximation property from Theorem \ref{thm:Runge_frac} to the case in which the potential $q$ is allowed to have zero as a Dirichlet eigenvalue for \eqref{eq:frac_Cal}, i.e. the condition \eqref{eq:not_zero} is violated. 

\subsubsection{Qualitative Runge approximation}
\label{sec:qual_Runge_zero}
Before turning to the quantitative Runge approximation property, we reprove its qualitative variant.
The main result here reads:

\begin{lem}
\label{lem:qual_Runge_zero}
Let $n\in \N$, $s\in (0,1)$.
Let $\Omega \subset \R^n$ be an open, bounded and smooth domain. Assume that $W \subset \Omega_e$ is open with $\overline{\Omega}\cap \overline{W}= \emptyset$. Let $q\in L^{\infty}(\Omega)$. 
Then, the set 
\begin{align*}
\mathcal{R}:=\{r_{\Omega} P_q f  + z: \ f \in H_1 , \ z \in Z_2 \}
\end{align*}
is dense in $L^2(\Omega)$.
\end{lem}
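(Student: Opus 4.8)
The strategy is the classical Hahn--Banach duality argument for Runge approximation, adapted to the Fredholm setting. I would show that if $v \in L^2(\Omega)$ annihilates $\mathcal R$, then $v \equiv 0$. So suppose $(v, r_\Omega P_q f + z)_\Omega = 0$ for all $f \in H_1$ and all $z \in Z_2$. Taking $f=0$ and letting $z$ range over $Z_2$ forces $v \perp Z_2$, i.e. $v \in H_2$. With $v \in H_2$ we are precisely in the range where the dual problem \eqref{eq:dual} is solvable: let $w \in H_2$ be the solution of $((-\D)^s + q)w = v$ in $\Omega$, $w = 0$ in $\Omega_e$.

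\textbf{Key computation.} For $f \in H_1$, write $u = P_q f$ (a representative solving \eqref{eq:inhom_Dir} with $F=0$ and exterior data $f$), so $u - e_0 f \in \widetilde H^s(\Omega)$ and $((-\D)^s+q)u = 0$ in $\Omega$. Then I would test the equation for $w$ against $u$ and the equation for $u$ against $w$, using the symmetric bilinear form $B_q$ from Lemma \ref{lem:well_posed}. Since $w \in \widetilde H^s(\Omega)$ is an admissible test function for the equation $((-\D)^s+q)u=0$, and since $u - e_0 f \in \widetilde H^s(\Omega)$ is admissible for the equation solved by $w$, a short integration by parts (bookkeeping the nonlocal boundary term on $W$, where $\supp f \subset \overline W$) gives
\begin{align*}
(v, u)_\Omega = B_q(w, u) = B_q(w, e_0 f) = (f, (-\D)^s w)_{W}.
\end{align*}
Hence the annihilation hypothesis yields $(f, (-\D)^s w|_{W}) = 0$ for all $f \in H_1$. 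But by the definition of $H_1$ in \eqref{eq:eigen_1}, the orthogonal complement of $H_1$ inside $\widetilde H^s(W)$ is spanned by the functions $(-\D)^s z|_W$, $z \in Z_2$; equivalently $(-\D)^s w|_W = A^t v$ lies in $\widetilde H^s(W)$ and, being orthogonal to all of $H_1$, must lie in $Z_1$. Combined with $(f,(-\D)^s w)_W = 0$ for $f \in H_1$ this says $(-\D)^s w = 0$ in $W$. Since also $w = 0$ in $W \subset \Omega_e$, the weak unique continuation property (Lemma \ref{lem:WUCP}) forces $w \equiv 0$ on $\R^n$, and therefore $v = ((-\D)^s+q)w = 0$ in $\Omega$. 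This proves density.

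\textbf{Main obstacle.} The only delicate point is the correct handling of the nonlocal ``boundary'' pairing when integrating by parts between $u$ (which does not vanish in $\Omega_e$) and $w$ (which does), and making sure that the identity $(v,u)_\Omega = (f,(-\D)^s w)_W$ comes out with the right term and no leftover contributions from outside $W$ — this is exactly the computation underlying the adjoint formula $A^t v = (-\D)^s w|_W$ already recorded in the text, so it is essentially a matter of assembling the reduction $u \mapsto u - e_0 f$ together with the solvability condition \eqref{eq:solvability} and the definition of $H_1$. A secondary, purely bookkeeping issue is checking that $z \in Z_2$ contributes nothing: since $Z_2$ consists of solutions with zero exterior data, $r_\Omega P_q f + z$ is again of the admissible form and the pairing with $v \in H_2$ kills $z$ automatically, so no separate argument is needed beyond the first step.
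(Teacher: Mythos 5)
Your overall strategy is exactly the paper's: Hahn--Banach, restriction to $v\perp Z_2$ so that the dual problem \eqref{eq:dual} is solvable with $w\in H_2$, the identity $(v,P_qf)_\Omega=((-\D)^sw|_W,f)_W$ for $f\in H_1$, and a final appeal to weak unique continuation. However, there is a genuine gap in the step where you conclude $(-\D)^sw=0$ in $W$. From $(f,(-\D)^sw)_W=0$ for all $f\in H_1$ you may only conclude that $(-\D)^sw|_W$ lies in the annihilator of $H_1$; by the definition \eqref{eq:eigen_1} this annihilator is $\spa\{(-\D)^sz|_W:\ z\in Z_2\}$, which is \emph{nontrivial} whenever $Z_2\neq\{0\}$ (indeed, for $0\neq z\in Z_2$ one has $z=0$ on $W$, so $(-\D)^sz|_W=0$ would force $z\equiv0$ by Lemma \ref{lem:WUCP}). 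Your argument ``it lies in $Z_1$ and is orthogonal to $H_1$, hence vanishes'' is circular: membership in $Z_1$ \emph{is} orthogonality to $H_1$, so no cancellation is obtained, and the implicit claim that $R_s^{-1}(-\D)^sw|_W$ also lies in $H_1$ is not justified (the Hilbert-space adjoint $A^\ast v$ is the projection of this element onto $H_1$, not the element itself).

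The repair is the one the paper uses: write $(-\D)^sw|_W=\sum_{j}\lambda_j(-\D)^sz_j|_W$ with $z_j\in Z_2$, and apply Lemma \ref{lem:WUCP} to the function $w-\sum_j\lambda_jz_j$, which vanishes on $W$ together with its fractional Laplacian. This yields $w=\sum_j\lambda_jz_j\in Z_2$, and since also $w\in H_2$ and $H_2\perp Z_2$, one gets $w\equiv0$ and hence $v\equiv0$. Your handling of the $z$-part of $\mathcal R$ and the duality computation are otherwise fine.
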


\begin{proof}
In order to show the density result, by Hahn-Banach, it suffices to prove that any function $h \in L^2(\Omega)$ with $(h,g)_{\Omega}=0$ for all $g \in \mathcal{R}$ has to vanish identically. Given such a function $h$, we may immediately assume that $(h,z)_{\Omega}=0$ for all $z\in Z_2$, as $f=0 \in H_1$. In particular this entails that the dual problem
\begin{align*}
((-\D)^s + q) w &= h \mbox{ in } \Omega, \\
w & = 0 \mbox{ in } \Omega_e,
\end{align*}
is solvable with $w\in H_2$. Hence,
\begin{align*}
0 & = (h,P_q f)_{\Omega} = (((-\D)^s + q)w, P_q f)_{\Omega}
= ((-\D)^s w|_{W}, f)_{W} \mbox{ for all } f \in H_1.
\end{align*}
As a consequence, $(-\D)^s w|_{W} \in (H_1)^a$, where $(H_1)^a$ denotes the annihilator of $H_1$. By definition of the space $H_1$ this however entails that there exist $(\lambda_j, z_j) \in \R \times Z_2$, $j\in \{1,\dots,m\}$ and $m\in \N$, such that
\begin{align*}
(-\D)^s w|_{W} = \sum\limits_{j=1}^{m} \lambda_j (-\D)^s z_j|_{W}.
\end{align*}
Since also $w=0= \sum\limits_{j=1}^{m}\lambda_j z_j$ on $W$, by the weak unique continuation property (c.f. Lemma \ref{lem:WUCP}), this entails that $w= \sum\limits_{j=1}^{m} \lambda_j z_j$, whence $w  \in Z_2 \cap H_2$. From this we however infer $w\equiv 0$ and thus $h\equiv 0$, which concludes the argument.
\end{proof}

\begin{rmk}
\label{rmk:dense}
By orthogonality of the spaces $H_2, Z_2$, Lemma \ref{lem:qual_Runge_zero} also directly implies that for $W \subset \Omega_e$ with $\overline{\Omega}\cap \overline{W}= \emptyset$ the set
\begin{align*}
\mathcal{R}:= \{r_{\Omega} P_q f: \ f\in H_1 \}
\end{align*}
is dense in $H_2$.
\end{rmk}

\subsubsection{Quantitative Runge approximation}
\label{sec:quant_Runge_zero}
Relying on the qualitative Runge approximation from above,
as the main result of this subsection, we prove the following quantitative Runge approximation property.

\begin{thm}
\label{thm:quant_Runge_frac}
Let $s\in(0,1)$ and $n\in \N$.
Let $\Omega \subset \R^n$ be an open, bounded, smooth domain. Let $W \subset \Omega_e$ be a Lipschitz domain with $\overline{\Omega}\cap \overline{W}= \emptyset$. Assume that $q\in L^{\infty}(\Omega)$. Then there exist constants $C,\mu>0$ such that for any $\overline{v} \in H^{s}_{\overline{\Omega}}$ there are $f\in H^{s}_{\overline{W}}$ and $z\in Z_2$ with
\begin{align*}
\|\overline{v}- P_{s}f - z\|_{L^2(\Omega)} \leq \epsilon \|\overline{v}\|_{H^{s}_{\overline{\Omega}}}, \  \|f\|_{H^{s}_{\overline{W}}} \leq C e^{C \epsilon^{-\mu}} \|\overline{v}\|_{L^2(\Omega)}.
\end{align*}
The constants $C,\mu$ only depend on $s,n, \Omega, W, \|q\|_{L^{\infty}(\Omega)}$.
\end{thm}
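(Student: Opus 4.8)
The plan is to deduce the quantitative statement from the qualitative density result of Lemma \ref{lem:qual_Runge_zero} (in the sharpened form of Remark \ref{rmk:dense}) by the same duality-plus-quantitative-unique-continuation scheme as in the proof of Theorem \ref{thm:Runge_frac} in \cite{RS17}, carefully carrying along the finite dimensional correction coming from the eigenspace $Z_2$. First I would set up the relevant operators: on the Hilbert spaces $H_1 \subset \widetilde H^s(W)$ and $H_2 \subset L^2(\Omega)$ we have the compact operator $A = \iota r_\Omega P_q$ introduced above, whose Hilbert-space adjoint is $A^\ast v = R_s^{-1}(-\D)^s w|_W$ with $w$ the $H_2$-solution of the dual problem \eqref{eq:dual}. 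By Remark \ref{rmk:dense}, the range of $A$ is dense in $H_2$; equivalently $A^\ast$ is injective on $H_2$. The key quantitative input is then a modulus-of-continuity estimate for $A^\ast$ of propagation-of-smallness type: there exist $C, \mu > 0$ such that
\begin{align*}
\|v\|_{H_2} \leq \omega\!\left( \frac{\|A^\ast v\|_{\widetilde H^s(W)}}{\|v\|_{H_2}} \right) \|v\|_{H_2}, \qquad \omega(t) = C |\log t|^{-1/\mu},
\end{align*}
for $v \in H_2 \setminus \{0\}$. This is exactly the statement that the map $v \mapsto (-\D)^s w|_W$, where $w$ solves $((-\D)^s + q)w = v$ in $\Omega$ with $w = 0$ in $\Omega_e$, is logarithmically stably invertible; it follows from the interior-to-exterior quantitative unique continuation / Carleman estimates for the fractional Schrödinger equation used in \cite{RS17} (via the Caffarelli–Silvestre extension and three-balls inequalities), together with the weak unique continuation property of Lemma \ref{lem:WUCP} to get injectivity. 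The only new point compared with \cite{RS17} is that here $v$ ranges over the finite-codimension space $H_2 = L^2(\Omega)/Z_2$ rather than all of $L^2(\Omega)$, but since $Z_2$ was quotiented out precisely to restore solvability of \eqref{eq:dual}, the estimate goes through verbatim on $H_2$.

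Next I would convert the modulus-of-continuity estimate for $A^\ast$ into an approximability-with-cost estimate for $A$ by the standard singular-value / functional-calculus argument (see e.g. the abstract lemma underlying Theorem 1.4 of \cite{RS17}): if $A : X \to Y$ is a compact, injective-adjoint operator between Hilbert spaces such that $\|A^\ast v\|_X \geq C e^{-C \|v\|_Y^{-\mu}}\|v\|_Y$ in the quantitative form above, then for every target $\psi \in Y$ and every $\epsilon > 0$ there is $f \in X$ with $\|A f - \psi\|_Y \leq \epsilon \|\psi\|_Y$ and $\|f\|_X \leq C e^{C \epsilon^{-\mu}} \|\psi\|_Y$. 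Apply this with $X = H_1$, $Y = H_2$, and $\psi = \Pi_{H_2}\overline v$, the orthogonal projection of $\overline v \in H^s_{\overline\Omega} \subset L^2(\Omega)$ onto $H_2$. This yields $f \in H_1 \subset H^s_{\overline W}$ with $\|r_\Omega P_q f - \Pi_{H_2}\overline v\|_{L^2(\Omega)} \leq \epsilon \|\Pi_{H_2}\overline v\|_{L^2(\Omega)} \leq \epsilon\|\overline v\|_{L^2(\Omega)}$ and $\|f\|_{H^s_{\overline W}} \leq C e^{C\epsilon^{-\mu}}\|\overline v\|_{L^2(\Omega)}$. Finally, writing $\overline v = \Pi_{H_2}\overline v + z$ with $z := \overline v - \Pi_{H_2}\overline v \in Z_2$, we obtain
\begin{align*}
\|\overline v - r_\Omega P_q f - z\|_{L^2(\Omega)} = \|\Pi_{H_2}\overline v - r_\Omega P_q f\|_{L^2(\Omega)} \leq \epsilon \|\overline v\|_{L^2(\Omega)} \leq \epsilon \|\overline v\|_{H^s_{\overline\Omega}},
\end{align*}
which is precisely the claimed estimate (with $P_s f$ in the statement denoting $r_\Omega P_q f$, the restriction to $\Omega$ of the Poisson solution). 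One should also note $\|f\|_{H^s_{\overline W}}$ controls the $\widetilde H^s(W)$-norm and that $H_1$ being closed of finite codimension does not worsen any constant.

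The main obstacle I anticipate is the quantitative unique continuation estimate for $A^\ast$ in the presence of the zero eigenvalue, i.e. making sure that the passage to the quotient by $Z_2$ does not degrade the logarithmic modulus. Concretely one must check: (i) that the solution operator $v \mapsto w$ to \eqref{eq:dual}, defined on $H_2$ with the extra orthogonality projection, is bounded \emph{uniformly} (this is the index-zero Fredholm statement, with a bound depending only on $\|q\|_{L^\infty(\Omega)}$, $s$, $n$, $\Omega$), and (ii) that the quantitative unique continuation from $W$ into $\Omega$ — which for fixed $w$ says $\|w\|$ in $\Omega$ is controlled by a logarithm of the Cauchy data $(w, (-\D)^s w)|_W$ — can be combined with (i) to bound $\|v\|_{H_2} = \|((-\D)^s+q)w\|_{H_2}$ by the same logarithmic function of $\|(-\D)^s w|_W\|_{\widetilde H^s(W)}$. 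Step (ii) is where \cite{RS17}'s machinery (Carleman estimates from \cite{Rue15}, three-balls and propagation-of-smallness for the extended problem) enters; since those arguments are local and insensitive to whether $0$ is an eigenvalue, the only genuinely new bookkeeping is tracking that the constants remain those of Theorem \ref{thm:Runge_frac}, now with $H_2$ in place of $L^2(\Omega)$. Everything else — the abstract singular value argument, the projection onto $Z_2$, the identification $\widetilde H^s(W) = H^s_{\overline W}$ — is routine.
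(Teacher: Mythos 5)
Your proposal follows essentially the same route as the paper: the singular value decomposition of $A\colon H_1\to H_2$ (Lemma \ref{lem:zero_eig_spec}), the conversion of a logarithmic stability estimate for the dual problem into a cost-of-control estimate by truncating the SVD (Lemma \ref{lem:control}), the observation that the quantitative unique continuation input from \cite{RS17} holds for any pair $(w,v)$ solving \eqref{eq:dual} and hence requires no spectral assumption, and the treatment of $Z_2$ by orthogonal projection. One correction, though: the modulus-of-continuity estimate for $A^{\ast}$ must be stated with a \emph{weaker} norm of $v$ on the left-hand side --- the paper assumes $\|v\|_{H^{-s}(\Omega)} \leq C\bigl(\log\bigl(\|v\|_{L^2(\Omega)}/\|(-\D)^s w\|_{H^{-s}(W)}\bigr)\bigr)^{-\mu}\|v\|_{L^2(\Omega)}$ --- since the form you wrote, with $\|v\|_{H_2}$ on both sides, would force $\|A^{\ast}v\|\geq c\|v\|$ uniformly on $H_2$ and thus contradict the compactness of $A^{\ast}$; the gap between the $H^{-s}(\Omega)$ and $L^2(\Omega)$ norms of the tail $r_\alpha$ is precisely what gets paired with $\|\overline{v}\|_{H^{s}_{\overline{\Omega}}}$ by duality in the proof of Lemma \ref{lem:control}.
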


The quantitative Runge approximation property will follow from a modification of Lemma 2.3 in \cite{RS17} and the quantitative unique continuation result from \cite{RS17}. Indeed, the finite dimensional additional part, will not play any role for the cost of controlability (c.f. the arguments below, in particular Lemma \ref{lem:control}).\\

As an auxiliary step towards the proof of Theorem \ref{thm:quant_Runge_frac}, we rely on the singular value decomposition of the operators $A, A^{\ast}$. 

\begin{lem}
\label{lem:zero_eig_spec}
Let $n\in \N$, $s\in (0,1)$.
Let $\Omega \subset \R^n$ be an open, bounded and smooth domain. Let $W \subset \Omega_e$ be open with $\overline{\Omega}\cap \overline{W}= \emptyset$.
Let $A:H_1 \subset \widetilde{H}^s(W)\rightarrow H_2 \subset L^2(\Omega)$ be as above. Then $A$ is a compact, injective linear operator whose image in $H_2$ is dense. Moreover, there exists a singular value decomposition with $\{\varphi_k\}_{k \in \N} \subset H_1$, $\{\psi_k\}_{k\in \N} \subset H_2$, $\{\sigma_k\}_{k\in \N} \subset \R_+$ and $\sigma_k \geq \sigma_{k+1}\geq \dots >0$. In particular,
\begin{align*}
A \varphi_k = \sigma_k \psi_k, \ A^{\ast} \psi_k = \sigma_k \varphi_k.
\end{align*}
\end{lem}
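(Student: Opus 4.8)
The plan is to establish the claimed singular value decomposition by verifying the three structural properties of $A$ — compactness, injectivity, and density of range — and then invoking the standard spectral theorem for compact operators between Hilbert spaces. Compactness is essentially already built into the definition of $A$: it factors through the compact Sobolev embedding $\iota : H^s(\R^n) \to L^2(\Omega)$ (since $\Omega$ is bounded), while $P_q : H_1 \to H^s(\R^n)/Z_2$ and the restriction $r_\Omega$ are bounded; composing a bounded operator with a compact one yields a compact operator. The only subtlety is to make sure that passing to the quotient by $Z_2$ and then identifying $H_2$ with $L^2(\Omega)/Z_2$ does not spoil this, but since $Z_2$ is finite-dimensional and $r_\Omega P_q f - f$ has support considerations that make $A f = \iota r_\Omega(P_q f - f)$ well-defined (as already noted in the text), compactness is immediate.

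For injectivity, I would suppose $A f = 0$ for some $f \in H_1$. Then $r_\Omega P_q f \in Z_2$, i.e. the solution $u$ of $((-\D)^s + q)u = 0$ in $\Omega$, $u = f$ in $\Omega_e$, satisfies $u = z$ on $\Omega$ for some $z \in Z_2$. Since $z \in \widetilde H^s(\Omega)$ vanishes in $\Omega_e$, the function $w := u - z$ solves $((-\D)^s + q)w = 0$ in $\Omega$ and $w = f$ in $\Omega_e$, while $w = 0$ in $\Omega$. In particular $w = 0$ in $\Omega$; but then $(-\D)^s w = (-\D)^s w + q w = 0$ in $\Omega$ as well (the potential term vanishes since $w = 0$ there, and $((-\D)^s+q)w=0$). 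By the weak unique continuation property (Lemma \ref{lem:WUCP}), $w \equiv 0$ on $\R^n$, hence $f = w|_{\Omega_e} = 0$. Density of the range of $A$ in $H_2$ is exactly the content of Remark \ref{rmk:dense}, which is itself a direct corollary of the qualitative Runge approximation Lemma \ref{lem:qual_Runge_zero} together with the orthogonal decomposition $L^2(\Omega) = Z_2 \perp H_2$.

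Having verified these three properties, the existence of the singular value decomposition follows from the classical theory: $A^\ast A : H_1 \to H_1$ is a compact, self-adjoint, nonnegative operator, and by injectivity of $A$ it is in fact positive definite, so the spectral theorem gives an orthonormal basis $\{\varphi_k\}$ of $H_1$ of eigenfunctions with eigenvalues $\sigma_k^2 > 0$, which we order decreasingly; setting $\psi_k := \sigma_k^{-1} A \varphi_k$ gives an orthonormal system in $H_2$ with $A\varphi_k = \sigma_k \psi_k$ and $A^\ast \psi_k = \sigma_k \varphi_k$. Density of the range guarantees that $\{\psi_k\}$ is actually an orthonormal basis of $H_2$ (its closed span is the closure of $\mathrm{ran}(A)$), and injectivity guarantees $\{\varphi_k\}$ spans all of $H_1$ so that no zero singular values occur. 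The main obstacle — really the only place requiring genuine argument rather than citation — is the injectivity step, and specifically the careful bookkeeping of the quotient by $Z_2$: one must argue that the ambiguity "$u$ equals $f$ on $\Omega_e$ only modulo $Z_2$" can be absorbed because elements of $Z_2$ vanish outside $\Omega$, so that subtracting a representative of $Z_2$ recovers a genuine solution to the exterior problem with zero Cauchy data in $\Omega$, at which point Lemma \ref{lem:WUCP} applies. Everything else is a matter of assembling standard functional-analytic facts.
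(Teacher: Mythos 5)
Your proposal is correct and follows essentially the same route as the paper: establish compactness (via the boundedness of $P_q$ and the compact embedding into $L^2(\Omega)$), injectivity (via the weak unique continuation property of Lemma \ref{lem:WUCP}), and density of the range (via Lemma \ref{lem:qual_Runge_zero} and Remark \ref{rmk:dense}), then apply the spectral theorem to $A^{\ast}A$. Two small points of divergence are worth noting. First, you spell out the injectivity step in full — absorbing the $Z_2$-ambiguity by subtracting a representative $z\in Z_2$, which vanishes in $\Omega_e$, so that $w=u-z$ has vanishing Cauchy data $w=(-\D)^s w=0$ in $\Omega$ and Lemma \ref{lem:WUCP} applies; the paper only asserts this in one line, so your bookkeeping is a genuine (and correct) completion of a detail left implicit. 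Second, for the completeness of $\{\psi_k\}$ in $H_2$ you argue abstractly that the closed span of $\{\psi_k\}$ equals $\overline{\mathrm{ran}(A)}=H_2$, using the already-established density; the paper instead runs a second, self-contained Hahn--Banach argument (showing $(\psi_k,v)_{\Omega}=0$ for all $k$ forces $A^{\ast}v\in Z_1$, hence $(-\D)^s w|_{W}=\sum\alpha_i(-\D)^s z_i|_{W}$, and then invoking Lemma \ref{lem:WUCP} once more to get $w\in Z_2\cap H_2=\{0\}$). Your version is the more economical of the two, since it reuses the density statement rather than re-deriving it; both are valid.
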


\begin{proof}
The compactness of $A$ follows from the regularity properties of the Poisson operator $P_q$, which maps $H_1$ into a subspace of $H^s(\R^n)$, and the compactness of the embedding $\iota :\widetilde{H}^{s}(\Omega) \rightarrow L^2(\Omega)$. The injectivity is a consequence of the unique continuation principle for the fractional Laplacian. The density of the image in $H_2$ follows from the qualitative Runge approximation of Lemma \ref{lem:qual_Runge_zero} and of Remark \ref{rmk:dense}.\\
Hence, by the spectral theorem for compact, self-adjoint operators, the operator $A^{\ast} A: H_1 \rightarrow H_1$ has an orthonormal eigenvalue basis $\{\varphi_k\}_{k\in \N}$ with positive, decreasing eigenvalues $\{\mu_k\}_{k\in \N}$. For $\sigma_k:= \sqrt{\mu_k}$ we set $\psi_k:= \frac{1}{\sigma_k} A \varphi_k$ and claim that this forms a complete, orthonormal system in $H_2$. As orthonormality follows from the definition, it suffices to check completeness. This however follows from the density of the span of $\{\varphi_k\}_{k\in \N}$ in $H_1$ and the unique continuation properties of the fractional Laplacian: By Hahn-Banach, it suffices to prove that if for some $v\in H_2$ it holds that
\begin{align}
\label{eq:HB}
(\psi_k, v)_{\Omega} = 0 \mbox{ for all } k \in \N,
\end{align}
then $v\equiv 0$. But as \eqref{eq:HB} is equivalent to the condition that $(\varphi_k, A^{\ast}v)_{\widetilde{H}^s(W)}=0$ for all $k\in \N$, we infer that on the one hand $A^{\ast} v \in Z_1$. By definition of $A^{\ast}$, this implies that $(\varphi_k,(-\D)^s w|_{W})_{W}=0$ for all $k \in \N$. Here $w\in H_2$ denotes the solution of the dual problem \eqref{eq:dual} with inhomogeneity $v\in H_2$.
Thus, there exist $\alpha_i \in \R$, $z_i \in Z_2$, $i\in\{1,\dots,m\}$ such that
\begin{align*}
(-\D)^s w|_{W} = \sum\limits_{i=1}^{m} \alpha_i (-\D)^s z_i|_{W}. 
\end{align*}
As also $w= 0=\sum\limits_{i=1}^{m} \alpha_i  z_i$ on $W$, the (weak) unique continuation property of the fractional Laplacian (c.f. Lemma \ref{lem:WUCP}) implies that on the one hand $w\equiv \sum\limits_{i=1}^{m} \alpha_i  z_i \in Z_2$. However, on the other hand the assumption that $v\in H_2$ yields $w \in H_2$. Since thus $w \in Z_2 \cap H_2$, we arrive at the conclusion $w\equiv 0$ and hence $v \equiv 0$, which concludes the completeness proof. 
\end{proof}

Based on the singular value decomposition and following the lines of the argument presented in Lemma 3.3  in \cite{RS17}, we have the following auxiliary result:

\begin{lem}
\label{lem:control}
Let $n\in \N$, $s\in (0,1)$.
Let $\Omega \subset \R^n$ be an open, bounded and smooth domain. Let $W \subset \R^n$ be open with $\overline{\Omega}\cap \overline{W}= \emptyset$. Let $w \in H^{s}_{\overline{\Omega}}\cap H_2$ be a solution of the dual equation \eqref{eq:dual} with inhomogeneity $v \in H_2$ and potential $q \in L^{\infty}(\Omega)$.
Assume that for some $C>0, \mu >0$ we have
\begin{align*}
\|v\|_{H^{-s}(\Omega)} \leq C \left(\log \left( \frac{\|v\|_{L^2(\Omega)}}{\|(-\D)^s w\|_{H^{-s}(W)}}\right) \right)^{-\mu} \|v\|_{L^2(\Omega)}.
\end{align*}
Then for any $\overline{v}\in H^{s}_{\overline{\Omega}}$ and any $\epsilon>0$ there exist $f\in H_1$ and $z \in Z_2$ such that
\begin{align*}
\|\overline{v}- P_{s}f - z\|_{L^2(\Omega)} \leq \epsilon \|\overline{v}\|_{H^{s}_{\overline{\Omega}}}, \  \|f\|_{H^{s}_{\overline{W}}} \leq C \exp(C \epsilon^{-\mu}) \|\overline{v}\|_{L^2(\overline{\Omega})}.
\end{align*}
\end{lem}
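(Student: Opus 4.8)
The plan is to run the Tikhonov–regularisation argument underlying Lemma 3.3 in \cite{RS17}, now in the functional analytic framework of the operator $A:H_1\to H_2$ and its singular value decomposition from Lemma \ref{lem:zero_eig_spec}. First I would reduce the assertion to a pure controllability statement for $A$: writing $\overline{v}=P_{Z_2}\overline{v}+\phi$ with respect to the orthogonal splitting $L^2(\Omega)=Z_2\perp H_2$, so that $\phi:=P_{H_2}\overline{v}\in H_2$ and $P_{Z_2}\overline{v}\in Z_2$ (here one uses that $Z_2\subset\widetilde{H}^s(\Omega)$ is finite dimensional, so the $L^2$-projection onto it lands in $\widetilde{H}^s(\Omega)$), it suffices to produce $f\in H_1$ with $\|Af-\phi\|_{L^2(\Omega)}$ small and $\|f\|$ controlled. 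Indeed, since $Af=P_{H_2}(r_{\Omega}P_qf)$, the choice $z:=P_{Z_2}(\overline{v}-r_{\Omega}P_qf)\in Z_2$ gives $\overline{v}-r_{\Omega}P_qf-z=P_{H_2}(\overline{v}-r_{\Omega}P_qf)=\phi-Af$, so the two requested bounds follow.

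For fixed $\alpha>0$ set $f_\alpha:=(A^{\ast}A+\alpha)^{-1}A^{\ast}\phi$ and $r_\alpha:=Af_\alpha-\phi$. The Euler–Lagrange relation for the corresponding Tikhonov functional gives $A^{\ast}r_\alpha=-\alpha f_\alpha$, hence $\|r_\alpha\|_{L^2(\Omega)}^2+\alpha\|f_\alpha\|^2=-(r_\alpha,\phi)_{L^2(\Omega)}$. Since $r_\alpha\in H_2\perp Z_2$ we may replace $\phi$ by $\overline{v}$ in this pairing; using $(\widetilde{H}^s(\Omega))^{\ast}=H^{-s}(\Omega)$ and Cauchy–Schwarz in $L^2$ this yields the two a priori bounds $\|r_\alpha\|_{L^2(\Omega)}^2\le\|r_\alpha\|_{H^{-s}(\Omega)}\|\overline{v}\|_{H^{s}_{\overline{\Omega}}}$ and $\alpha\|f_\alpha\|^2\le\|r_\alpha\|_{L^2(\Omega)}\|\overline{v}\|_{L^2(\Omega)}$, the latter (together with $\|r_\alpha\|_{L^2(\Omega)}\le\|\phi\|_{L^2(\Omega)}\le\|\overline{v}\|_{L^2(\Omega)}$) also giving $\|f_\alpha\|\le\alpha^{-1/2}\|\overline{v}\|_{L^2(\Omega)}$. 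I would then feed $v=r_\alpha\in H_2$ into the assumed estimate: the dual solution $w_\alpha$ of \eqref{eq:dual} with inhomogeneity $r_\alpha$ automatically lies in $H^{s}_{\overline{\Omega}}\cap H_2$, and from the description of the Banach adjoint $A^t$ together with the fact that the Riesz isomorphism $R_s:\widetilde{H}^s(W)\to H^{-s}(W)$ is an isometry one gets $\|(-\D)^s w_\alpha\|_{H^{-s}(W)}=\|A^{\ast}r_\alpha\|=\alpha\|f_\alpha\|$, so the hypothesis reads
\begin{align*}
\|r_\alpha\|_{H^{-s}(\Omega)}\le C\Big(\log\tfrac{\|r_\alpha\|_{L^2(\Omega)}}{\alpha\|f_\alpha\|}\Big)^{-\mu}\|r_\alpha\|_{L^2(\Omega)}.
\end{align*}

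To conclude, fix $\epsilon>0$ and choose $\alpha:=e^{-c_0\epsilon^{-\mu}}$, so that $\|f_\alpha\|\le e^{c_0\epsilon^{-\mu}/2}\|\overline{v}\|_{L^2(\Omega)}$ already has the asserted form. If $\|r_\alpha\|_{L^2(\Omega)}\le\epsilon\|\overline{v}\|_{L^2(\Omega)}$ we are done, since $\|\overline{v}\|_{L^2(\Omega)}\le\|\overline{v}\|_{H^{s}_{\overline{\Omega}}}$. Otherwise $\|r_\alpha\|_{L^2(\Omega)}>\epsilon\|\overline{v}\|_{L^2(\Omega)}$, and combining this with $\alpha\|f_\alpha\|^2\le\|r_\alpha\|_{L^2(\Omega)}\|\overline{v}\|_{L^2(\Omega)}$ gives $\|r_\alpha\|_{L^2(\Omega)}/(\alpha\|f_\alpha\|)\ge(\epsilon/\alpha)^{1/2}=\epsilon^{1/2}e^{c_0\epsilon^{-\mu}/2}$, so for $\epsilon$ small the logarithm above is bounded below by a fixed multiple of $\epsilon^{-\mu}$; inserting this into the displayed estimate and into $\|r_\alpha\|_{L^2(\Omega)}^2\le\|r_\alpha\|_{H^{-s}(\Omega)}\|\overline{v}\|_{H^{s}_{\overline{\Omega}}}$ yields $\|r_\alpha\|_{L^2(\Omega)}\le C\epsilon^{\mu^2}\|\overline{v}\|_{H^{s}_{\overline{\Omega}}}$. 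A final relabelling of $\epsilon$ (changing only the constants $C,\mu$) together with $f:=f_\alpha\in H_1\subset H^{s}_{\overline{W}}$ and $z:=P_{Z_2}(\overline{v}-r_{\Omega}P_qf)$ then gives both inequalities in the stated form.

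The main obstacle is the apparent circularity of the logarithmic modulus, whose argument contains the quantity $\|r_\alpha\|_{L^2(\Omega)}$ that we are trying to bound; this is resolved by the dichotomy in the last paragraph, which crucially exploits the sharper bound $\alpha\|f_\alpha\|^2\le\|r_\alpha\|_{L^2(\Omega)}\|\overline{v}\|_{L^2(\Omega)}$ (rather than the cruder $\|f_\alpha\|\le\alpha^{-1/2}\|\overline{v}\|_{L^2(\Omega)}$) to extract a usable lower bound on the log-argument in the nontrivial case. Everything else is a transcription of the corresponding steps in \cite{RS17}, carried out modulo the finite dimensional space $Z_2$, which enters only through the harmless projections $P_{Z_2}$, $P_{H_2}$ and does not affect the control cost.
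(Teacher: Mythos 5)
Your argument is correct, and it reaches the same conclusion by a genuinely different regularization scheme. The paper works with the truncated singular value decomposition: it sets $R_{\alpha}\tilde v=\sum_{\sigma_j\geq\alpha}\tfrac{\beta_j}{\sigma_j}\varphi_j$, so the residual $r_{\alpha}=\sum_{\sigma_j<\alpha}\beta_j\psi_j$ automatically satisfies $\|A^{\ast}r_{\alpha}\|_{H^{s}_{\overline{W}}}\leq\alpha\|r_{\alpha}\|_{L^2(\Omega)}$, and hence the argument of the logarithm in the hypothesis is bounded below by $1/\alpha$ with no further work; the chain $\|AR_{\alpha}\tilde v-\tilde v\|^2_{L^2}\leq|(\overline v,r_{\alpha})|\leq\|\overline v\|_{H^{s}_{\overline{\Omega}}}\|r_{\alpha}\|_{H^{-s}(\Omega)}$ then closes immediately and one optimizes in $\alpha$. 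You instead use Tikhonov regularization $f_{\alpha}=(A^{\ast}A+\alpha)^{-1}A^{\ast}\phi$, for which the identity $A^{\ast}r_{\alpha}=-\alpha f_{\alpha}$ only gives $\|A^{\ast}r_{\alpha}\|=\alpha\|f_{\alpha}\|$, and the lower bound on the log-argument is no longer automatic; your dichotomy (either $\|r_{\alpha}\|_{L^2}\leq\epsilon\|\overline v\|_{L^2}$ and one is done, or the sharper bound $\alpha\|f_{\alpha}\|^2\leq\|r_{\alpha}\|_{L^2}\|\overline v\|_{L^2}$ forces $\|r_{\alpha}\|_{L^2}/(\alpha\|f_{\alpha}\|)\geq(\epsilon/\alpha)^{1/2}$) is exactly the right way to break that circularity, and your bookkeeping of exponents ($\epsilon^{\mu^2}$, relabelling at the end) is consistent. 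The treatment of the finite dimensional space $Z_2$ via the projections, the identification of $\|(-\D)^s w_{\alpha}\|_{H^{-s}(W)}$ with $\alpha\|f_{\alpha}\|$ through $A^t$ and the Riesz isometry, and the reduction $\overline v-r_{\Omega}P_qf-z=\phi-Af$ all match what the paper does implicitly. What the paper's spectral cutoff buys is a shorter, dichotomy-free argument in which the residual is exactly the orthogonal projection of $\tilde v$ onto the low-singular-value modes; what your Tikhonov route buys is that it never needs the explicit singular system beyond the well-definedness of $A^{\ast}$ (you use Lemma \ref{lem:zero_eig_spec} only for compactness, injectivity and dense range), so it would survive in settings where one prefers not to diagonalize.
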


\begin{proof}
The argument essentially follows as in \cite{RS17}, since the additional finite dimensional subspace $Z_2$ can be dealt with ``by hand". More precisely, given $\overline{v}\in H^{s}_{\overline{\Omega}} \subset L^2(\Omega)$, the singular value decomposition of Lemma \ref{lem:zero_eig_spec} implies that
\begin{align*}
\overline{v}= \sum\limits_{j=1}^{\infty} \beta_j \psi_j + \sum\limits_{l=1}^m \tilde{\beta}_l z_l,
\end{align*}
where $z_l \in Z_2$.
Considering the projection $\tilde{v}=\sum\limits_{j=1}^{\infty} \beta_j \psi_j$ of $\overline{v}$ onto $H_2$ first,
we define
\begin{align*}
R_{\alpha} \tilde{v} = \sum\limits_{\sigma_j \geq \alpha} \frac{\beta_j}{\sigma_j} \varphi_j \in H_1 \subset H^{s}_{\overline{W}}.
\end{align*}
Then by orthogonality (of the functions $\varphi_j$ but also of the spaces $H_2$ and $Z_2$) we obtain that
\begin{align*}
\|R_{\alpha} \tilde{v}\|_{H^s_{\overline{W}}} \leq \frac{1}{\alpha} \|\tilde{v}\|_{L^2(\Omega)} \leq \frac{1}{\alpha} \|\overline{v}\|_{L^2(\Omega)}. 
\end{align*}
Furthermore, we set $r_{\alpha}:= \sum\limits_{\sigma_j < \alpha} \beta_j \psi_j \in H_2$.
Observing that $A R_{\alpha} \tilde{v} = \sum\limits_{\sigma_j \geq \alpha} \beta_j \psi_j $, relying on Lemma \ref{lem:zero_eig_spec} and using orthogonality of $H_2$ and $Z_2$, then yields
\begin{align*}
\|A R_{\alpha}(\tilde{v})-\tilde{v}\|_{L^2(\Omega)}^2
&\leq |(\tilde{v},r_{\alpha})_{L^2(\Omega)}|
= |(\overline{v},r_{\alpha})_{L^2(\Omega)}|
\leq \|\overline{v}\|_{\widetilde{H}^{s}(\Omega)}\|r_{\alpha}\|_{H^{-s}(\Omega)}
\leq \|\overline{v}\|_{H^{s}(\R^n)}\|r_{\alpha}\|_{H^{-s}(\Omega)}\\
&\leq \|\overline{v}\|_{H^{s}_{\overline{\Omega}}}\|r_{\alpha}\|_{H^{-s}(\Omega)}
\leq \|\overline{v}\|_{H^{s}_{\overline{\Omega}}} \left(\log\left( \frac{\|r_{\alpha}\|_{L^2(\Omega)}}{\|A^{\ast} r_{\alpha}\|_{H^{s}_{\overline{W}}}} \right) \right)^{-\mu} \|r_{\alpha}\|_{L^2(\Omega)}\\
& \leq \|\overline{v}\|_{H^{s}_{\overline{\Omega}}} \left( \log \left(1/\alpha\right) \right)^{-\mu} \|r_{\alpha}\|_{L^2(\Omega)}
 \leq \|\overline{v}\|_{H^{s}_{\overline{\Omega}}} \left(\log \left(1/\alpha\right) \right)^{-\mu} \|A R_{\alpha}(\tilde{v})-\tilde{v}\|_{L^2(\Omega)}.
\end{align*}
Hence, by orthogonality we have that for $f:= R_{\alpha} \tilde{v} $ 
\begin{align*}
&\|R_{\alpha} \tilde{v}\|_{H^s_{\overline{W}}} \leq \frac{1}{\alpha} \|\overline{v}\|_{L^2(\Omega)}, \\
& \|A R_{\alpha}(\tilde{v})+ \sum\limits_{l=1}^{m}\tilde{\beta}_l z_l -\overline{v}\|_{L^2(\Omega)}
= \|A R_{\alpha}(\tilde{v})-\tilde{v}\|_{L^2(\Omega)}
 \leq \|\overline{v}\|_{H^{s}_{\overline{\Omega}}}\left(\log\left(1/\alpha\right)\right)^{-\mu} .
\end{align*}
Optimizing in $\alpha$ concludes the proof.
\end{proof}

\begin{proof}[Proof of Theorem \ref{thm:quant_Runge_frac}]
By combining the observations from Lemma \ref{lem:control} with the unique continuation properties from \cite{RS17} (which hold as soon as one has a pair $(w,v)$ solving \eqref{eq:dual}; in particular, it does not require a spectral assumption), we in particular infer the quantitative Runge property. 
\end{proof}

\subsection{Proofs of Theorem \ref{thm:Lip_frac_Runge_eigen} and of Proposition \ref{prop:finite_Cauchy}}
\label{sec:Proof_zero}

The proofs of Theorem \ref{thm:Lip_frac_Runge_eigen} and of Proposition \ref{prop:finite_Cauchy} follow similarly as the proof of Theorem \ref{thm:Lip_frac_Runge}. However, instead of using Alessandrini's identity as formulated in Lemma \ref{lem:Aless}, we rely on the following modification of it:

\begin{lem}
\label{lem:Aless_1}
Let $s\in (0,1)$, $n \geq 1$ and assume that $\Omega \subset \R^n$ is an open, bounded domain. Let further $W \subset \Omega_e$ be open. 
Let $u_1, u_2 \in H^{s}(\R^n)$ be solutions to \eqref{eq:frac_Cal} with potentials $q_1, q_2 \in L^{\infty}(\Omega)$. Then for any further solution $v_2$ to \eqref{eq:frac_Cal} with potential $q_2$ we have
\begin{align}
\label{eq:Aless_1}
((q_1-q_2)u_1, u_2)_{\Omega} = ((-\D)^s u_2, u_1 - v_2)_{W} - ((-\D)^s u_1 - (-\D)^s v_2, u_2)_{W}.
\end{align}
In particular,
\begin{align}
\label{eq:Cauchy}
\left| ((q_1 - q_2)u_1, u_2)_{\Omega} \right|
\leq d(\mathcal{C}_1, \mathcal{C}_2)\|u_2\|_H \|u_1\|_H,
\end{align}
with $\|w\|_H:= \|w\|_{\widetilde{H}^s(W)} + \|(-\D)^s w\|_{H^{-s}(W)}$.
\end{lem}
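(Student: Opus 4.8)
The plan is to derive \eqref{eq:Aless_1} as a weak-formulation identity analogous to the proof of Lemma \ref{lem:Aless} in \cite{GSU16}, and then to extract \eqref{eq:Cauchy} from it by choosing $v_2$ to be the best approximant to $u_1$ within the Cauchy data set $\mathcal C_2$. First I would write the weak formulations of the three equations involved: $((-\D)^s+q_1)u_1=0$ and $((-\D)^s+q_2)u_2=0$ and $((-\D)^s+q_2)v_2=0$ in $\Omega$. Testing the $u_1$-equation against $u_2$ and the $u_2$-equation against $u_1$ and subtracting gives, using the symmetry of the bilinear form $((-\D)^{s/2}u_1,(-\D)^{s/2}u_2)_{\R^n}$, the identity $((q_1-q_2)u_1,u_2)_\Omega = ((-\D)^s u_2,u_1)_{\Omega_e} - ((-\D)^s u_1,u_2)_{\Omega_e}$, where the exterior terms arise because $u_1,u_2$ need not have zero exterior data; here the pairing over $\Omega_e$ should be read as the duality pairing between $H^{-s}$ and $H^s$ on the complement (strictly over the supports of the exterior data). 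The key algebraic manoeuvre is then to insert $v_2$: since $((-\D)^s+q_2)v_2=0$ in $\Omega$ as well, testing against $u_1-u_1=0$ is vacuous, but testing the $v_2$-equation against $u_1$ and the $u_1$-equation against $v_2$ and subtracting yields $((q_1-q_2)u_1,v_2)_\Omega$-type terms; more cleanly, one observes that replacing $u_1$ by $v_2$ in suitable slots is legitimate because both solve the $q_2$-equation, so the exterior pairings can be rewritten with the \emph{differences} $u_1-v_2$ and $(-\D)^s u_1-(-\D)^s v_2$, which is exactly the right-hand side of \eqref{eq:Aless_1}. Since $\supp(u_j|_{\Omega_e})$ and the relevant exterior data are concentrated in $W$ (or one restricts attention to a chosen $W$), the pairings localise to $W$, giving the stated form.

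Next, for \eqref{eq:Cauchy}, I would bound the right-hand side of \eqref{eq:Aless_1} by $\|(-\D)^s u_2\|_{H^{-s}(W)}\|u_1-v_2\|_{\widetilde H^s(W)} + \|(-\D)^s u_1-(-\D)^s v_2\|_{H^{-s}(W)}\|u_2\|_{\widetilde H^s(W)}$, which is at most $\|u_2\|_H\,\|(u_1,(-\D)^s u_1)-(v_2,(-\D)^s v_2)\|_H$ in the product-space norm on $\widetilde H^s(W)\times H^{-s}(W)$. Now I would choose $v_2$ to realise (up to an arbitrarily small margin, then pass to the infimum) the distance from the Cauchy pair of $u_1$ to the closed subspace $\mathcal C_2$: by definition of $d(\mathcal C_1,\mathcal C_2)$ the infimum of $\|h-k\|_H$ over $k\in\mathcal C_2$ is at most $d(\mathcal C_1,\mathcal C_2)\|h\|_H$ for $h=(u_1|_W,(-\D)^s u_1|_W)\in\mathcal C_1$, and $\|h\|_H=\|u_1\|_H$. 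Combining these gives $|((q_1-q_2)u_1,u_2)_\Omega|\le d(\mathcal C_1,\mathcal C_2)\|u_1\|_H\|u_2\|_H$, as claimed. (The closedness of $\mathcal C_2$ in $\widetilde H^s(W)\times H^{-s}(W)$, already recorded in the excerpt, guarantees the infimum is attained, though for the estimate even a near-minimiser suffices.)

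The main obstacle I anticipate is bookkeeping the duality pairings on the exterior set correctly when $u_1,u_2$ do not vanish in $\Omega_e$: one must be careful that $(-\D)^s u_j$ is only meaningful as an element of $\widetilde H^{-s}(W)$ (or a distribution acting on $H^s$ supported appropriately), so the ``integration by parts'' producing the boundary/exterior terms from $((-\D)^{s/2}u_1,(-\D)^{s/2}u_2)_{\R^n}$ has to be justified via the weak definitions rather than by manipulating pointwise integrals. Concretely, one should run the computation in the form: $B_{q_2}(u_2,u_1)=0$ when tested against $u_1\in\widetilde H^s(\Omega)$, but $u_1\notin\widetilde H^s(\Omega)$ in general, so instead one uses $u_1-v_2$, which can be arranged to lie in $\widetilde H^s(\Omega)$ if $v_2$ is chosen with the same exterior data as $u_1$ on $\Omega_e\setminus W$ — and this is precisely why the $v_2$ is introduced and why only the pieces on $W$ survive. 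Once that is handled, the rest is the elementary approximation-theoretic estimate above.
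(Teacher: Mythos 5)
Your argument follows the paper's proof essentially verbatim: identity \eqref{eq:Aless_1} is obtained by subtracting from the basic symmetry identity $((q_1-q_2)u_1,u_2)_{\Omega}=((-\D)^s u_2,u_1)_W-((-\D)^s u_1,u_2)_W$ the corresponding vanishing identity $0=((-\D)^s u_2,v_2)_W-((-\D)^s v_2,u_2)_W$ for the pair $(u_2,v_2)$ of solutions to the same $q_2$-equation, and \eqref{eq:Cauchy} then follows by bounding the right-hand side by the product-space norms and taking the infimum over $v_2$ ranging over $\mathcal{C}_2$. This is exactly the paper's route; the only cosmetic differences are that you phrase the first step via the bilinear forms $B_{q_i}$ and that you worry about localising the exterior pairings from $\Omega_e$ to $W$, which is automatic in the intended application since the admissible exterior data are supported in $\overline{W}$.
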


\begin{proof}
The generalized Alessandrini's identity is a consequence of the same ideas as the proof of Lemma \ref{lem:Aless}. Indeed, by using the equations for $u_1, u_2$, we infer
\begin{align*}
((q_1-q_2)u_1, u_2)_{\Omega} = ((-\D)^s u_2, u_1 )_{W} - ((-\D)^s u_1, u_2)_{W}.
\end{align*}
Next we note that if $v_2$ is also a solution of the equation \eqref{eq:frac_Cal} with potential $q_2$, then
\begin{align*}
0 = ((-\D)^s u_2, v_2 )_{W} - ((-\D)^s v_2, u_2)_{W}.
\end{align*}
Subtracting the previous two identities implies the claim of \eqref{eq:Aless_1}.

The estimate \eqref{eq:Cauchy} follows from \eqref{eq:Aless_1} by the definition of the aperture between two metric spaces. More precisely, we have
\begin{align*}
|((q_1-q_2)u_1, u_2)_{\Omega}|
&\leq |((-\D)^s u_2, (u_1-u_2))_{W}| + |((-\D)^s u_1 - (-\D)^s v_2, u_2)_{W}|\\
& \leq \|(-\D)^s u_2\|_{H^{-s}(W)} \|u_1 - v_2 \|_{\widetilde{H}^s(W)} + \|u_2\|_{\widetilde{H}^s(W)}\|(-\D)^s (u_1 -v_2)\|_{H^{-s}(W)}\\
&\leq (\|(-\D)^s u_2\|_{H^{-s}(W)} + \|u_2\|_{\widetilde{H}^s(W)})\|u_1 - v_2\|_{H}.
\end{align*}
Moreover, since the above inequality holds for any solution $v_2$ to \eqref{eq:frac_Cal} with potential $q_2$ it follows that 
\begin{align*}
|((q_1-q_2)u_1, u_2)_{\Omega}|\leq \left (\inf_{v_2\in \mathcal{C}_2}\frac{ \|u_1-v_2\|_{H}}{\| u_1\|_{H}} \right) \|u_1\|_H \|u_2\|_H \leq d(\mathcal{C}_1, \mathcal{C}_2) \|u_1\|_H \|u_2\|_H.
\end{align*}
\end{proof}

\begin{rmk}
\label{rmk:Cauchy_data}
If it is known that $(u_1, (-\D)^s u_1) \in \mathcal{C}_1(p^{(1)})$ and $ (u_2, (-\D)^s u_2)$, $(v_2, (-\D)^s v_2) \in \mathcal{C}_2(p^{(2)})$ for some functions $p^{(1)}, p^{(2)}$ as in the definition of the finite Cauchy data, then it is possible to replace the term $d(\mathcal{C}_1, \mathcal{C}_2)$ in \eqref{eq:Cauchy} by the quantity $d(\mathcal{C}_1(p^{(1)}), \mathcal{C}_2(p^{(2)}))$.
\end{rmk}

With Lemma \ref{lem:Aless_1} at hand, the proofs of Theorem \ref{thm:Lip_frac_Runge_eigen} and Proposition \ref{prop:finite_Cauchy} follow analogously as the proof of Theorem \ref{thm:Lip_frac_Runge}: It suffices to replace the use of the Runge approximation result from \cite{RS17}, which was recalled in Theorem \ref{thm:Runge_frac}, by the Runge approximation from Theorem \ref{thm:quant_Runge_frac}. We discuss some of the details of this. As there are no major differences in the arguments for the proofs of Proposition \ref{prop:finite_Cauchy} and for Theorem \ref{thm:Lip_frac_Runge_eigen}, we only present the details for the proof of Proposition \ref{prop:finite_Cauchy}. 

\begin{proof}[Proof of Proposition \ref{prop:finite_Cauchy}]
As in the proof of Theorem \ref{thm:Lip_frac_Runge} we choose functions $h_{1}^{(k)}, \dots, h_{m}^{(k)}$, $k\in \{1,2\}$, such that the matrix
\begin{align*}
M:= \begin{pmatrix}
(g_1, h_{1}^{(1)} h_{1}^{(2)})_{\Omega} & \dots & (g_m, h_{1}^{(1)} h_{1}^{(2)})_{\Omega}\\ 
\vdots & \vdots & \vdots\\
(g_1, h_{m}^{(1)} h_{m}^{(2)})_{\Omega} & \dots & (g_m, h_{m}^{(1)} h_{m}^{(2)})_{\Omega} 
\end{pmatrix}
\end{align*}
is invertible (by choosing $h_{l}^{(k)}$ suitably, we can even ensure that the matrix $M$ is arbitrarily close to the identity matrix). By the Runge approximation result of Theorem \ref{thm:quant_Runge_frac}, we obtain solutions $u_{l}^{(k)} \in H_2$ to the Schrödinger equation \eqref{eq:frac_Cal} and kernel elements $z_{l}^{(k)} \in Z_2$ such that
\begin{align*}
h_{l}^{(k)} = u_{l}^{(k)} + z_{l}^{(k)} + r_{l}^{(k)} \mbox{ in } \Omega, \ u_l^{(k)} = f_l^{(k)} \mbox{ in } \Omega_e,
\end{align*}
where $k\in \{1,2\}$, and
\begin{align*}
\|r_{l}^{(k)}\|_{L^2(\Omega)} \leq \epsilon \|h_{l}^{(k)}\|_{H^{s}_{\overline{\Omega}}}, \ 
\|u_{l}^{(k)}\|_{H^{s}_{\overline{W}}} \leq C e^{C \epsilon^{-\mu}} \|h_{l}^{(k)}\|_{L^2(\Omega)}.
\end{align*}
Relying on the estimates from Lemma \ref{lem:Aless_1}, we obtain
\begin{align}
\label{eq:eigen_bda}
\begin{split}
\left|  ((q_1 - q_2) h_l^{(1)}, h_l^{(2)}) \right|
&\leq 
\left|  ((q_1 - q_2) (u_l^{(1)} + z_l^{(1)}), (u_l^{(2)} + z_l^{(2)})) \right|\\
& \quad + |((q_1 - q_2) r_j^{(1)}, h_j^{(2)})| + |((q_1 - q_2) r_j^{(2)}, h_j^{(1)})| + |((q_1 - q_2) r_j^{(1)}, r_j^{(2)})|\\
&\leq  d(\mathcal{C}_1(f^{(1)}), \mathcal{C}_2(f^{(2)})) \|u_j^{(1)} + z_j^{(1)}\|_H \|u_j^{(2)} + z_j^{(2)}\|_H\\
& \quad + |((q_1 - q_2) r_j^{(1)}, h_j^{(2)})| + |((q_1 - q_2) r_j^{(2)}, h_j^{(1)})| + |((q_1 - q_2) r_j^{(1)}, r_j^{(2)})|\\
& \leq  d(\mathcal{C}_1(f^{(1)}), \mathcal{C}_2(f^{(2)})) \|u_j^{(1)} + z_j^{(1)}\|_H \|u_j^{(2)} + z_j^{(2)}\|_H \\
& \quad + \epsilon \|q_1 - q_2\|_{L^{\infty}(\Omega)} (\|h_j^{(1)}\|_{H^s_{\overline{\Omega}}} + \|h_j^{(2)}\|_{H^s_{\overline{\Omega}}}).
\end{split}
\end{align}
Next we estimate the term $\|u_j^{(1)} + z_j^{(1)}\|_H $. 
First we note that
\begin{align}
\label{eq:u1}
\begin{split}
\|u_j^{(1)} + z_j^{(1)}\|_{H} 
&= \|u_j^{(1)} + z_j^{(1)}\|_{\widetilde{H}^s(W)} + \|(-\D)^s (u_j^{(1)}+z_j^{(1)})\|_{H^{-s}(W)}\\
&= \|f_j^{(1)}\|_{\widetilde{H}^s(W)} +  \|(-\D)^s (u_j^{(1)}+z_j^{(1)})\|_{H^{-s}(W)},
\end{split}
\end{align}
where we have used that $u_j^{(1)}+z_j^{(1)}= f_j^{(1)}$ in $W$. Next, we have
\begin{align}
\label{eq:u2}
\begin{split}
\|(-\D)^s (u_j^{(1)} + z_j^{(1)})\|_{H^{-s}(W)} 
&\leq \|(-\D)^s (u_j^{(1)} + z_j^{(1)}-f_j^{(1)})\|_{H^{-s}(W)} + \|(-\D)^s f_j^{(1)}\|_{H^{-s}(W)} \\
&\leq \|(-\D)^s (u_j^{(1)}+ z_j^{(1)}-f_j^{(1)})\|_{H^{-s}(W)} + C \|f_j^{(1)}\|_{\widetilde{H}^{s}(W)}.
\end{split}
\end{align}
We have used $f_j^{(1)} \in H^{s}_{\overline{W}}$ in order to deal with the second contribution on the right hand side.
Since $z_j^{(1)}, u_j^{(1)}+ z_j^{(1)}-f_j^{(1)} \in H^{s}_{\overline{\Omega}}$, we infer for $x\in W$ that
\begin{align}
\label{eq:u3}
\begin{split}
|(-\D)^s (u_j^{(1)}+ z_j^{(1)}-f_j^{(1)})(x) |
&= \left| c_{s,n}\int\limits_{\Omega} \frac{(u_j^{(1)}+ z_j^{(1)}-f_j^{(1)})(y)}{|x-y|^{n+2s}} dy \right|\\
&\leq c_{n,s} |\Omega|^{1/2}(|x|+ C_{\Omega, W})^{-n-2s} \|(u_j^{(1)}+ z_j^{(1)}-f_j^{(1)})\|_{L^2(\Omega)}\\
&= c_{n,s} |\Omega|^{1/2}(|x|+ C_{\Omega, W})^{-n-2s} \|u_j^{(1)}+ z_j^{(1)}\|_{L^2(\Omega)}.
\end{split}
\end{align}
Hence, integrating this over $x\in W$ we obtain
\begin{align}
\label{eq:u4}
\|(-\D)^s (u_j^{(1)}+ z_j^{(1)}-f_j^{(1)})\|_{H^{-s}(W)} \leq
\|(-\D)^s (u_j^{(1)}+ z_j^{(1)}-f_j^{(1)})\|_{L^{2}(W)}
\leq C \|u_j^{(1)}+ z_j^{(1)}\|_{L^2(\Omega)}.
\end{align}
In order to bound the right hand side of \eqref{eq:u4}, we use the decomposition $h^{(1)}_j = u_j^{(1)} + z_j^{(1)} + r_j^{(1)}$ and the $L^2(\Omega)$ orthogonality of $u_j^{(1)}$ and $z_j^{(1)}$. We obtain
\begin{align}
\label{eq:z1}
 \|z_j^{(1)} + u_j^{(1)}\|_{L^2(\Omega)} \leq \|h_j^{(1)}\|_{L^2(\Omega)} + \|r_j^{(1)}\|_{L^2(\Omega)}
\leq 2 \|h_j^{(1)}\|_{L^2(\Omega)}.
\end{align}
Here we used that we can bound $ \|r_j^{(1)}\|_{L^2(\Omega)} \leq \epsilon \|h_j^{(1)}\|_{H^{s}_{\overline{\Omega}}} \leq \|h_j^{(1)}\|_{L^2(\Omega)}$, if $\epsilon \leq \frac{\|h_j^{(1)}\|_{L^2(\Omega)}}{\|h_j^{(1)}\|_{H^{s}_{\overline{\Omega}}}}$ (c.f. Lemma \ref{lem:control}).
Summing up the estimates \eqref{eq:u1}-\eqref{eq:z1} and using that $\|f_j^{(1)}\|_{\widetilde{H}^{s}(W)}\leq C e^{C \epsilon^{-\mu}} \|h_j^{(1)}\|_{L^2(\Omega)}$, we thus arrive at
\begin{align}
\label{eq:combi}
\|u^{(1)}_j+ z_j^{(1)}\|_H \leq (C e^{C \epsilon^{-\mu}} + 2) \|h_j^{(1)}\|_{L^2(\Omega)}).
\end{align}

Assuming that
\begin{align*}
q_1 - q_2 = \sum\limits_{j=1}^{m} a_j g_j,
\end{align*}
and using the estimate \eqref{eq:Cauchy} from Lemma \ref{lem:Aless_1} and Remark \ref{rmk:Cauchy_data} in combination with \eqref{eq:combi} and \eqref{eq:eigen_bda}, we infer
\begin{align*}
\left| \sum\limits_{j=1}^{m} a_j (g_j, h_{l}^{(1)} h_{l}^{(2)})_{\Omega} \right|
&\leq C d(\mathcal{C}_1(f^{(1)}), \mathcal{C}_2(f^{(2)})) e^{C \epsilon^{-\mu}}\|h_l^{(1)}\|_{H^s_{\overline{\Omega}}}\|h_l^{(2)}\|_{H^s_{\overline{\Omega}}}\\
& \quad + \epsilon \|q_1 - q_2\|_{L^{\infty}(\Omega)} \|h_l^{(1)}\|_{H^s_{\overline{\Omega}}}\|h_l^{(2)}\|_{H^s_{\overline{\Omega}}}.
\end{align*}
The proof of Proposition \ref{prop:finite_Cauchy} is now concluded by the same absorption argument as in the proof of Theorem \ref{thm:Lip_frac_Runge}.
\end{proof}

\section{Necessity of an Exponential Dependence in the Case of Piecewise Constant Potentials}
\label{sec:opti}

In Example \ref{ex:pc} we dealt with the setting of piecewise constant potentials associated with a partitioning of $\Omega$ into $N$ Lipschitz subdomains $D_1, \dots, D_N$. In this context we saw that by precisely keeping track of the dependences in the proof of Theorem \ref{thm:Lip_frac_Runge}, the constant $C_1>1$ from Theorem \ref{thm:Lip_frac_Runge} could be bounded by $e^{C N^{\tilde{\mu}}}$ for some $\tilde{\mu}>0$. In this section we show that, as in the case of the classical Calder\'on problem (c.f. \cite{Ron06}), (up to the precise choice of the exponent) such an exponential dependence on the constant $N>1$ is also necessary in general. 
In order to observe this, we combine the strategy introduced by Rondi \cite{Ron06}, which is based on an adaptation of the instability argument of Mandache \cite{M01} with the constructions given in \cite{RS17d}.

\begin{thm}
\label{thm:exp_dep}
Let $s\in (0,1)$, let $\Omega =B_1(0)$ be the unit ball in $\R^n$ for some $n \in \N$ and let $W:= B_3(0) \setminus \overline{B_2(0)}$. Let $\lambda_1$ denote the first Dirichlet eigenvalue of $(-\D)^s$ on $\Omega$. Then there exist $N_0 \in \N$, $\mu>0$ and $\bar{C}\geq 1$ such that for all $N\geq N_0$ there is a partitioning $D_1, \dots, D_N$ of $\Omega$ into Lipschitz sets,
such that the Lipschitz constant $C_1$ from Theorem \ref{thm:Lip_frac_Runge} associated with the choice 
\begin{align*}
q_1, q_2 \in \spa \{\chi_{D_1}, \dots, \chi_{D_N}\}, \ \|q_1\|_{L^{\infty}(\Omega)}, \|q_2\|_{L^{\infty}(\Omega)} \leq \frac{ \lambda_1}{2}
\end{align*}
necessarily satisfies
\begin{align*}
C_1 \geq \exp(\bar{C} N^{\mu}).
\end{align*}
\end{thm}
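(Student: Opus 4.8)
The plan is to combine Mandache-type instability arguments (in the form used by Rondi \cite{Ron06} for the classical problem) with the explicit instability constructions for the fractional Calder\'on problem from \cite{RS17d}. The starting point is the contrapositive interpretation of Theorem \ref{thm:Lip_frac_Runge}: if the Lipschitz constant $C_1$ for the finite-dimensional class $\spa\{\chi_{D_1}, \dots, \chi_{D_N}\}$ (with $L^\infty$-bound $\lambda_1/2$) were \emph{too small}, then the map $q \mapsto \Lambda_q$ restricted to this class, measured in the operator norm $\|\cdot\|_*$ from Corollary \ref{thm:Lip_frac_Runge_fulldata}, would have a bi-Lipschitz lower bound with a \emph{controlled} constant. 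The idea is to show that such a uniform lower bound is incompatible with the quantitative smoothing (exponential ill-posedness) of the forward map, provided the partition $D_1, \dots, D_N$ is chosen so that the class is ``combinatorially rich'' on a scale that is exponentially fine in $N$.

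First I would fix the partition: for $N$ large, choose $D_1, \dots, D_N$ to be (essentially) congruent Lipschitz sets of diameter comparable to $N^{-1/n}$, arranged in a regular pattern inside $\Omega = B_1(0)$, exactly as in Example \ref{ex:pc}. On this class, consider the finite family of potentials $q = \sum_j a_j \chi_{D_j}$ with $a_j \in \{0, \tau\}$ for a small amplitude $\tau \le \lambda_1/2$ to be chosen; these are $2^N$ potentials, pairwise separated by at least $\tau$ in a suitable weak norm but by only $O(\tau)$ in $L^\infty(\Omega)$. On the data side, I would invoke the quantitative upper bound for $\Lambda_q$ coming from the smoothing properties of the fractional Poisson operator together with the exponential instability constructions of \cite{RS17d}: there one builds, for a small parameter, a large number of potentials whose Dirichlet-to-Neumann maps are exponentially close while the potentials themselves are not. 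Concretely, the forward map $q \mapsto \Lambda_q$ (as an operator between the relevant Sobolev spaces restricted to $W$) has an entropy/covering estimate showing that an $\eta$-ball in the data space can be covered by at most $\exp(C |\log \eta|^{\alpha})$-many balls for some $\alpha < $ (a power linked to $N$), reflecting the logarithmic modulus of continuity of the inverse problem proved in \cite{RS17, RS17d}.

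The core counting step is then the standard Mandache/Rondi dichotomy. If the Lipschitz estimate held with constant $C_1$, the $2^N$ well-separated potentials would map to $2^N$ points in the data space that are pairwise at distance at least $\tau / C_1$; hence a $(\tau/C_1)$-packing of cardinality $2^N$ inside a data ball of radius $O(\tau)$ (using the a priori $L^\infty$-bound to control the image). The entropy bound from the previous paragraph says that any such packing has cardinality at most $\exp(C |\log(C_1)|^{\alpha})$ — here I would need to track that the relevant scale in the \cite{RS17d} construction, when specialized to oscillation frequency $\sim N^{1/n}$, produces precisely a bound of this shape. Comparing $2^N \le \exp(C |\log C_1|^\alpha)$ and solving for $C_1$ yields $\log C_1 \gtrsim N^{1/\alpha}$, i.e. $C_1 \ge \exp(\bar C N^{\mu})$ with $\mu = 1/\alpha > 0$, which is the claim. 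The spectral hypothesis $\|q_i\|_{L^\infty} \le \lambda_1/2$ guarantees, via the variational characterization of $\lambda_1$, that $0$ is not a Dirichlet eigenvalue of $(-\D)^s + q_i$, so that $\Lambda_{q_i}$ and Theorem \ref{thm:Lip_frac_Runge} genuinely apply to the whole family.

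The main obstacle I anticipate is making the entropy/instability estimate quantitative \emph{in $N$} with the correct exponent: the constructions in \cite{RS17d} are phrased for the infinite-dimensional problem and produce optimality of the \emph{logarithmic} modulus, and I would have to re-run them on the scale dictated by the partition (oscillations localized in the $D_j$'s at frequency $\sim N^{1/n}$) and verify that the number of almost-indistinguishable potentials one can pack is indeed doubly exponential in $N^{\mu}$ while the data stay within an exponentially small ball. Matching the exponent coming out of the Carleman/commutator estimates underlying the logarithmic stability with the exponent $\tilde\mu$ appearing on the upper-bound side in Example \ref{ex:pc} is where the bookkeeping is delicate; everything else is a routine packing-versus-covering comparison.
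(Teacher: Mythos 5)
Your proposal follows essentially the same route as the paper: a Rondi--Mandache packing-versus-covering argument on the partition-adapted piecewise constant class (an exponentially large $\delta$-discrete set of potentials versus a small $\epsilon$-net of Dirichlet-to-Neumann maps), combined with the covering construction for fractional DtN maps from \cite{RS17d} and the observation that the finite-data constant dominates the full-data constant. The obstacle you anticipate at the end is not actually present: the $\epsilon$-net of \cite{RS17d}, of cardinality $\exp(\eta(-\log\epsilon)^{2n-1})$, covers the DtN maps of \emph{all} potentials with the given $L^{\infty}$ bound and is independent of $N$, so the $N$-dependence enters only through the cardinality of the discrete set of potentials ($3^N$ in the paper, $2^N$ in your version), and no re-running of the instability construction at frequency $N^{1/n}$ is required.
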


\begin{proof}
Let $\tilde{C}_1>1$ be the Lipschitz constant in the estimate of Theorem \ref{thm:Lip_frac_Runge} for the knowledge of the \emph{full}, infinite measurement Dirichlet-to-Neumann map with potentials $q_1, q_2$ and a set $\{g_1, \dots, g_N\}$ as in Theorem \ref{thm:exp_dep}, i.e. assume that for all $q_1, q_2 \in \spa \{\chi_{D_1}, \dots, \chi_{D_N}\}$ it holds
\begin{align}
\label{eq:full_est}
\|q_1 - q_2\|_{L^{\infty}(\Omega)} \leq \tilde{C}_1 \|\Lambda_{q_1}-\Lambda_{q_2}\|_{\ast},
\end{align}
where $\|\Lambda_{q_1}-\Lambda_{q_2}\|_{\ast}:= \sup\{((\Lambda_{q_1}-\Lambda_{q_2})f_1, f_2)_{W}: \|f_1\|_{\widetilde{H}^{s}(W)}=1=\|f_2\|_{\widetilde{H}^s(W)} \}$.
Since $\|\Lambda_{q_1}-\Lambda_{q_2}\|_{\ast} \geq \|(\Lambda_{q_1}-\Lambda_{q_2})\tilde{f}_1\|_{H^{-s}(W)}$ for $\tilde{f_1} \in \widetilde{H}^s(W)$ with $\|\tilde{f}_1\|_{\widetilde{H}^{s}(W)}=1$, we note that $C_1 \geq \tilde{C}_1$. Hence it suffices to prove the claimed lower bound for the constant $\tilde{C}_1$.

In order to prove this, we follow the strategy of Rondi \cite{Ron06}. Thus, on the one hand we construct an exponentially large $\delta$-discrete set $X \subset \mathcal{Q}_N$, where
\begin{align*}
\mathcal{Q}_N:=\left\{q\in L^{\infty}(\Omega): \ q \mbox{ is piecewise constant}, \ \|q\|_{L^{\infty}(\Omega)}\leq \frac{\lambda_1}{2}  \right\}, \ d_{\mathcal{Q}_N}(q_1, q_2)= \|q_1-q_2\|_{L^{\infty}(\Omega)}.
\end{align*}
We recall that a set is said to be $\delta$-discrete, if all its elements are at a mutual distance larger or equal to $\delta$.
On the other hand, we show that the set of Dirichlet-to-Neumann maps 
\begin{align*}
\mathcal{L}:=\left\{\Lambda_{q}: q \in \mathcal{Q}_N, \ \|q\|_{L^{\infty}(\Omega)} \leq \frac{\lambda_1}{2} \right\}
\end{align*}
equipped with the operator norm $d_{\mathcal{L}}(\Lambda_{q_1},\Lambda_{q_2}):= \|\Lambda_{q_1}-\Lambda_{q_2}\|_{\widetilde{H}^{s}(W) \rightarrow H^{-s}(W)}$ has a relatively small $\epsilon$-net, i.e. there exists a subset of the set of $\mathcal{L}$ such that all points in $\mathcal{L}$ are at most $\epsilon$ far from this subset. Combining these two observations with the estimate from Theorem \ref{thm:Lip_frac_Runge} then yields the claimed lower bound on the constant $\tilde{C}_1>0$ (and hence also on the constant $C_1>0$ from Theorem \ref{thm:Lip_frac_Runge}).

Let us be more precise. We first cover $\Omega$ by (up to null-sets) disjoint cubes of side length $N^{-1}$ and a remainder (e.g. by covering $\Omega$ by a suitable coordinate grid of the desired lengths scales). We consider the first $N$ of these cubes. This allows us to construct a large $\delta$-discrete set on $\Omega$ for an arbitrary choice of $\delta\in (0,\min\{\lambda_1/2,1\})$: Indeed, by considering the piecewise constant functions, which on the chosen $N$ cubes attain any of the values $\pm \delta, 0$ and vanish outside of the $N$ cubes, we obtain a $\delta$-discrete set consisting of $3^N$ elements.

Using the strong smoothing properties of the Dirichlet-to-Neumann map for the fractional Laplacian, which were studied in \cite{RS17d}, it is possible to construct a small $\epsilon$-net for $\mathcal{L}$. Indeed, an orthonormal basis with smoothing properties in our geometry was constructed in Lemma 2.1 in \cite{RS17d}. Based on that, for any $\epsilon \in (0,1)$ an $\epsilon$-net of $\mathcal{L}$ with respect to the distance $
\tilde{d}_{\mathcal{L}}(\Lambda_{q_1},\Lambda_{q_2}):= \|
\Lambda_{q_1}-\Lambda_{q_2}\|_{L^2(W)\rightarrow L^2(W)}$ was 
constructed in Lemma 3.2 in \cite{RS17d}. Its cardinality was estimated by $
\exp(\eta(-\log(\epsilon))^{2n-1})$.

We seek to exploit these two observations. To this end, we first relate the $L^2$ and $H^{s}$ based operator norms of the fractional Dirichlet-to-Neumann maps. We infer that
\begin{align*}
\sup\limits_{\|f_1\|_{\widetilde{H}^{s}(W)}=1=\|f_2\|_{\widetilde{H}^s(W)}} \int\limits_{W}((\Lambda_{q_1}-\Lambda_{q_2}) f_1, f_2) dx 
&\leq \|\Lambda_{q_1}-\Lambda_{q_2}\|_{L^2(W) \rightarrow L^2(W)} \|f_1\|_{L^2(W)}\|f_2\|_{L^2(W)}\\
&\leq C_p\|\Lambda_{q_1}-\Lambda_{q_2}\|_{L^2(W)\rightarrow L^2(W)} ,
\end{align*}
where in the last line we used Poincar\'e's inequality $\|f_1\|_{L^2(W)} \leq C_p \|f_1\|_{\widetilde{H}^s(W)}$. Hence, in particular, $d_{\mathcal{L}}(f_1, f_2) \leq C_p \tilde{d}_{\mathcal{L}}(f_1,f_2)$. 

Next we observe that for each $\epsilon>0$ there exists $N_0=N_0(\epsilon)$ such that for all $N\geq N_0>0$ we have $3^N \geq \exp(\eta(-\log(\epsilon))^{2n-1})$, where $\eta>0$ is a constant which is independent of $\epsilon$. 

We now combine all these auxiliary results: For each $\delta \in (0, \min\{\lambda_1/2,1\})$ and $\epsilon \in (0,1) $, there exist $q_1, q_2 \in \mathcal{Q}_N$ which are $\delta$-separated with respect to $d_{\mathcal{Q}_N}(\cdot, \cdot)$ and such that $\Lambda_{q_1}, \Lambda_{q_2}$ are at distance with respect to $\tilde{d}_{\mathcal{L}}(\cdot, \cdot)$ at most $2\epsilon$ apart, such that for some constant $C>0$ (depending on $\eta$, but independent of $N$)
\begin{align}
\label{eq:final}
\delta \leq d_{\mathcal{Q}_N}(q_1,q_2) 
\leq C_1 d_{\mathcal{L}}(\Lambda_{q_1},\Lambda_{q_2}) 
\leq C_1 C_p \tilde{d}_{\mathcal{L}}(\Lambda_{q_1},\Lambda_{q_2}) 
\leq 2 C_1 C_p \epsilon
\leq 2 C_1 \exp(-C N^{1/(2n-1)}).
\end{align}
Here we have first made use of the fact that the functions $q_1, q_2$ are $\delta$-separated, then we used our Lipschitz bound, the estimate $d_{\mathcal{L}} \leq C_p \tilde{d}_{\mathcal{L}}$ and finally our choice of $N$ and $\epsilon$.
Rearranging \eqref{eq:final}, choosing for instance $\delta=\min\{\lambda_1/4, 1/2\}$ and noting that $C_p>1$ does not depend on $N$ then implies the claim.
\end{proof}

\section{\normalsize{Acknowledgments}}
The research carried out by  E. Sincich for the preparation of this paper has been supported by FRA 2016 {\it{Problemi inversi, dalla stabilit\`{a} alla ricostruzione}} funded by Universit\`{a} degli Studi di Trieste. E.S  acknowledges the support of Gruppo Nazionale per l'Analisi Matematica, la Probabilit\`{a} e le loro Applicazioni (GNAMPA) by the grant {\it{Analisi di problemi inversi:  stabilit\`{a} e
ricostruzione} }. 

\bibliographystyle{plain}
\bibliography{citations_Lipschitz_updated}

\end{document}